\def\strutdepth{\dp\strutbox}
\def \ss{\strut\vadjust{\kern-\strutdepth \sss}}
\def \sss{\vtop to \strutdepth{
\baselineskip\strutdepth\vss\llap{$\diamondsuit\;\;$}\null}}
\def\strutdepth{\dp\strutbox}
\def \sst{\strut\vadjust{\kern-\strutdepth \ssss}}
\def \ssss{\vtop to \strutdepth{
\baselineskip\strutdepth\vss\llap{$\spadesuit\;\;$}\null}}
\def\strutdepth{\dp\strutbox}
\def \ssh{\strut\vadjust{\kern-\strutdepth \sssh}}
\def \sssh{\vtop to \strutdepth{
\baselineskip\strutdepth\vss\llap{$\heartsuit\;\;$}\null}}
\newtheorem{theorem}{Theorem}[section]
\newtheorem{proposition}[theorem]{Proposition}
\newtheorem{lemma}[theorem]{Lemma}
\newtheorem{question}[theorem]{Question}
\theoremstyle{definition}
\theoremstyle{remark}
\newtheorem{remark}[theorem]{Remark}
\newcommand{\N}{\mathbb{N}}
\newcommand{\Q}{\mathbb{Q}}
\newcommand{\Z}{\mathbb{Z}}
\newcommand{\Hyp}{\mathbb{H}}
\newcommand{\FN}{F_N}
\newcommand{\BFN}{\overline{F}_N}
\newcommand{\Aut}{\text{Aut}}
\newcommand{\Inn}{\text{Inn}}
\newcommand{\Out}{\text{Out}}
\newcommand{\Fix}{\text{Fix}}
\newcommand{\Per}{\text{Per}}
\newcommand{\rank}{\text{rank}}
\renewcommand {\epsilon}{\varepsilon}
\renewcommand {\phi}{\varphi}
\renewcommand {\leq}{\leqslant}
\renewcommand {\geq}{\geqslant}
\title{Free group automorphisms with parabolic boundary orbits}
\author{Arnaud Hilion
\footnote{LATP - Universit\'e Aix-Marseille 3 - 
Avenue de l'escadrille Normandie-Ni\'emen - 
13397 Marseille Cedex 20 - France --
email: {\tt arnaud.hilion@univ.u-3mrs.fr}}
}
\date{\today}
\begin{document}

\maketitle

\begin{abstract}
For $N\geq 4$, we show that there exist automorphisms of the free group
$\FN$ which have a parabolic orbit in $\partial\FN$.
In fact, we exhibit a technology for producing infinitely many
such examples.
\end{abstract}


\section{Introduction}

An automorphism $\phi$ of the free group $\FN$ of rank $N$ 
induces a homeomorphism $\partial\phi$ of
the (Gromov) boundary $\partial\FN$ of $\FN$.
The dynamics of the map $\partial\phi$ on $\partial\FN$
has been studied a lot, see \cite{LL1,LL2,LL4,LL3,H0}. 
We give a survey of the known results relevant in our context
in \S \ref{sec:basics}.
In this paper, we focus on the following question:

\bigskip

{\em Does there exist an automorphism $\phi$ of $\FN$
such that there is a
parabolic orbit for the homeomorphism
$\partial\phi$?}

\bigskip
 
We say that an 
automorphism $\phi$ {\em has a parabolic orbit}
if there exists two points $X,Y\in\partial\FN$, $X\neq Y$, such that:
$$\lim_{k\rightarrow\pm\infty}\partial\phi^k(Y)=X.$$
We note that this implies that $X$ is a fixed point of $\partial\phi$.
In such a situation, the point $X\in\partial\FN$ is called a
{\em parabolic fixed point} for $\phi$, and the set
$\{\partial\phi^k(Y)\; | \;k\in\Z\}$ is called a 
{\em parabolic orbit}
for $\phi$.
We prove:

\begin{theorem}\label{thm:main}
For $N\geq 4$
there exists an infinite family $\{\phi_k\;|\;k\in\N\}$ of
automorphisms of $\FN$ which have a parabolic orbit,
such that for any $k,k',p,p'\in\N$, $\phi_k^p$ and
$\phi_{k'}^{p'}$ are conjugated if and only if $k=k'$
and $p=p'$.
\end{theorem}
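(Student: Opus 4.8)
The plan is to prove the two assertions of the theorem separately: first the \emph{existence} of a single automorphism with a parabolic orbit, by an explicit construction, and then the \emph{rigidity} statement separating the conjugacy classes of all the powers. For the existence part I would exhibit one automorphism $\Phi$ of $\FN$ (it is enough to build it on $F_4$ and extend it to $\FN$ by fixing the remaining basis elements), represented by a marked graph map $f\colon G\to G$ whose growth and fixed-point structure I can control. From $f$ I would read off a point $X\in\partial\FN$ fixed by $\partial\Phi$, as the fixed ray attached to a fixed direction at a fixed vertex, and then choose $Y\neq X$ built from an auxiliary generator. A key heuristic guiding the choice of $X$ is that it cannot be a ``simple'' fixed ray such as $a^{\infty}$ with $\Phi(a)=a$: for such a ray the forward and backward iterates of any neighbouring point are driven to opposite boundary points, so no single orbit can accumulate onto $X$ from both sides. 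Instead $X$ must be a \emph{degenerate} (neither attracting nor repelling) fixed point, so that a single orbit accumulates onto it from both time directions, in exact analogy with the Möbius map $z\mapsto z+1$ forcing every orbit to accumulate at its unique fixed point $\infty$ from both sides. Engineering such a balanced local picture is what requires the extra room of rank $N\geq 4$.

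The analytic heart of the existence part is then to prove $\lim_{k\to\pm\infty}\partial\Phi^{k}(Y)=X$. The plan is to track, for each $k\in\Z$, the reduced word representing $\Phi^{k}(Y)$ and to show that its initial segment agrees with $X$ up to a length tending to $+\infty$ as $|k|\to\infty$. The main tool here is the bounded cancellation lemma: iterating a graph map creates only a uniformly bounded amount of cancellation at each concatenation, so the expanding part of $\Phi^{k}(Y)$ survives and carries ever longer prefixes of $X$. The delicate point is to run this estimate \emph{simultaneously} for $\Phi$ and $\Phi^{-1}$, so that both tails of the bi-infinite orbit converge, and to check that the orbit has no accumulation point in $\partial\FN$ other than $X$.

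For the infinite family I would introduce a parameter into the construction — for instance by precomposing $\Phi$ with powers of a fixed twist, or by letting one defining word depend on $k$ — producing $\{\phi_k\mid k\in\N\}$, each with a parabolic orbit by the same mechanism, the verification being uniform in $k$. To separate the conjugacy classes I would use a conjugacy invariant fine enough to recover the pair $(k,p)$ from the class of $\phi_k^{p}$. Since conjugate automorphisms have conjugate images in $GL_N(\Z)$, the characteristic polynomial of the abelianization together with its eigenvalue/Jordan data is a first invariant, refined if needed by the growth type, the index $\ind$, and the geometry of the finite fixed-point set of $\partial\phi_k^{p}$. The point is to arrange the dependence on $k$ so that the chosen invariant is injective in $k$ and transforms predictably under $\phi\mapsto\phi^{p}$ (e.g.\ an eigenvalue $\mu_k$ of the abelianization with the $\mu_k$ pairwise multiplicatively independent, so that $\mu_k^{p}$ determines $(k,p)$); in the polynomially growing case, where eigenvalues degenerate, one must replace this by a finer invariant such as the index or a length-function datum. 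Conjugacy of $\phi_k^{p}$ and $\phi_{k'}^{p'}$ would then force all invariants to agree, hence $k=k'$ and $p=p'$.

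The principal obstacle I anticipate is the two-sided convergence of the orbit: unlike the generic North–South picture, here the forward and backward dynamics compete, and controlling the cancellation precisely enough to guarantee that the orbit converges to $X$ in both directions — rather than escaping to a different boundary point — is exactly where the construction must be engineered with care, and is consistent with the necessity of the hypothesis $N\geq 4$. A secondary difficulty is to make the non-conjugacy invariant sensitive enough to separate the ``mixed'' cases, for instance $\phi_k^{p}$ versus $\phi_{k'}^{p'}$ with $kp=k'p'$, which is precisely where a naive invariant would collapse.
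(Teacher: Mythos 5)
Your outline reproduces the architecture of the paper's proof --- an explicit example built on $F_4$, cancellation control under iteration of $\phi$ and $\phi^{-1}$, and abelianization/growth invariants to separate powers --- but at exactly the two points you yourself flag as hard, the proposal stops short of an argument, and these are where all the content lies. For existence you never exhibit $\Phi$, $X$ or $Y$, and your guiding heuristic is actually misleading: the paper shows that a parabolic fixed point must be singular and rational, i.e.\ $X=u^\infty$ with $u\in\Fix(\phi)$, and in its example the parabolic point is $X=ba^{-\infty}=(ba^{-1}b^{-1})^\infty$ for $\phi_k\colon a\mapsto a$, $b\mapsto ba$, $c\mapsto ca^{k+1}$, $d\mapsto dc$ --- precisely a translate of the ``simple'' ray $a^{-\infty}$ with $a$ fixed, of the kind you try to exclude. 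More importantly, the paper does not run a bounded-cancellation estimate at all: it chooses $\phi_k$ unipotent polynomially growing so that the improved relative train track theorem (Bestvina--Feighn--Handel, Theorem 5.1.8) yields \emph{exact} splittings, with no cancellation ever occurring at the splitting points under any iterate. The two-sided convergence then becomes a two-line verification for the single element $Y$ coming from $bd^{-1}$: the decomposition $\phi_k(bd^{-1})=bac^{-1}\cdot d^{-1}$ is a splitting for $\phi_k$, giving $\omega_{\phi_k}(bd^{-1})=ba^{-\infty}$, while $b\cdot d^{-1}$ is a splitting for $\phi_k^{-1}$, giving $\omega_{\phi_k^{-1}}(bd^{-1})=\omega_{\phi_k^{-1}}(b)=ba^{-\infty}$; Proposition \ref{prop:omega} then converts this into a genuine parabolic orbit in $\partial\FN$. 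Your plan, by contrast, leaves what you call ``the principal obstacle'' (simultaneous control of both time directions) unresolved, and bounded cancellation alone provides no mechanism forcing the forward and backward limits to coincide --- that coincidence is an engineered feature of the specific words, not a consequence of general estimates.

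For the rigidity statement, your primary invariant (multiplicatively independent eigenvalues of the abelianization) cannot work for the family as the paper builds it on $F_4$: the $\phi_k$ are unipotent, so all eigenvalues equal $1$ for every $k$ and $p$. You acknowledge this degeneration but only gesture at ``the index or a length-function datum'' without producing one. The paper instead handles the unipotent case directly: if $\psi\phi_{k'}^{p'}\psi^{-1}=\phi_k^p$ with abelianized matrices $M_{k'},M_k,P$, the equation $M_k^pP=PM_{k'}^{p'}$ forces $P$ into a block-triangular shape whose diagonal entries $\lambda_i$ satisfy $p'(k'+1)\lambda_3=p(k+1)\lambda_1$ and $p'\lambda_3=p\lambda_4$; then $\det P=\pm1$ gives $\lambda_i=\pm1$, whence $k=k'$ and $p=p'$ --- the off-diagonal entries $(k+1)p$ and $p$ of $M_k^p$ do the work that eigenvalues cannot. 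Note also that your ``extend by the identity'' shortcut is used by the paper only up to $N=5$; for $N\geq 6$ it switches to $\beta_k=\phi_1*\theta_k*id$ with $\theta_k\in\Aut(F_2)$ of growth rate $\lambda_k\in\Q(\sqrt{k})\smallsetminus\Q$ ($k$ prime), which is essentially your multiplicative-independence idea, but deployed only where exponential growth is available to make it meaningful. As it stands, then, the proposal is a plausible road map whose two decisive steps --- a concrete automorphism with a verified two-sided splitting mechanism, and a working conjugacy invariant in the polynomially growing regime --- are missing.
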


Discussions with some of the experts of the subject have led the
author to feel that
the existence of such parabolic orbits come somehow as a surprise. 
To put Theorem \ref{thm:main} in prospective, we would like to 
mention the following three facts.

First, 
given a compact set $K$ and a homeomorphism $f$
of $K$, one says that $f$ has {\em North-South dynamics},
if {\em (i)} $f$ has precisely two distinct fixed points $x^+$ and $x^-$, 
{\em (ii)} $\lim_{k\rightarrow+\infty}f^{k}(y) = x^{+}$
and $\lim_{k\rightarrow+\infty}f^{-k}(y) = x^{-}$
for all $y\in K\smallsetminus\{x^-,x^+\}$,
and {\em (iii)} the limit of
$f^{k}$, when $k$ tends to infinity,
is uniform on compact subsets of 
$K\smallsetminus\{x^{-}\}$
and the limit of
$f^{-k}$ is uniform on compact subsets of 
$K\smallsetminus\{x^{+}\}$.
It is proved in \cite{LL1} that ``most'' automorphisms of $\FN$,
in a precise sense we do not explain here, 
have North-South dynamics on $\partial\FN$.
In particular, they can not have a parabolic orbit.

Second, let $\delta$ be the automorphism of 
$F_2=<a,b>$ defined by $\delta(a)=a$ and $\delta(b)=ba$.
The outer automorphism class $D$ of $\delta$ is sometimes called a
{\em Dehn twist automorphism}.
The reader, who has in mind the action 
by isometries of $SL_2(\Z)$ on the hyperbolic plane,
should be warned that Dehn twist automorphisms 
do not give rise to parabolic orbits
in $\partial F_2$.
We give in \S \ref{sec:dehn} a description of all possible 
dynamics of automorphisms of $F_2$ in the outer class
$D^n$, for $n\in\Z$.

Third, more generally, it is known
that geometric automorphisms of $\FN$ do
not have parabolic orbits in $\partial\FN$.
We recall that an automorphisms $\phi$ of $\FN$ is {\em geometric}
if  there exist a surface $S$ (with non empty boundary)
with fundamental group $\pi_1(S)$ isomorphic
to $\FN$ and a homeomorphism $f$ of $S$
which induces $\phi$ on $\FN\cong \pi_1(S)$.
More details are given in \S \ref{sec:geometric}. 
As a consequence, since all automorphisms of $F_2$ are known to be geometric, one obtains:

\begin{proposition}\label{prop:F2}
There does not exist an automorphism of $F_2$
which has a parabolic orbit.
\end{proposition}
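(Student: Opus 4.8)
The plan is to deduce the statement directly from the two facts recalled just before it: that \emph{geometric} automorphisms of $\FN$ have no parabolic orbit (this is established in \S\ref{sec:geometric}, and I would take it as given), together with the classical fact that \emph{every} automorphism of $F_2$ is geometric. The whole content is therefore to show that an arbitrary $\phi\in\Aut(F_2)$ is induced by a homeomorphism of a surface with non-empty boundary. I want to stress at the outset that one really needs all of $\Aut(F_2)$, not merely $\Out(F_2)$, to be geometric: the boundary homeomorphism $\partial\phi$ genuinely depends on $\phi$ and not just on its outer class (composing $\phi$ with an inner automorphism $\mathrm{ad}_g$ alters $\partial\phi$ by the boundary action of $g$), so the definition of a parabolic orbit is attached to the automorphism itself.

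Concretely, I would take $S$ to be the once-holed torus $\Sigma_{1,1}$, so that $\pi_1(S)\cong\langle a,b\rangle=F_2$ with boundary represented by the commutator $[a,b]$. The identification $\Out(F_2)\cong GL_2(\Z)$ due to Nielsen, obtained from the action on $H_1(F_2)\cong\Z^2$, is realized geometrically: the extended mapping class group $\MCG^{\pm}(\Sigma_{1,1})$ surjects onto $\Out(F_2)\cong GL_2(\Z)$, the orientation-preserving classes accounting for the determinant $+1$ matrices and a single orientation-reversing homeomorphism supplying the remaining coset. Hence every \emph{outer} automorphism of $F_2$ is induced by a homeomorphism of $S$.

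It then remains to promote this from outer automorphisms to honest automorphisms, which is where care is needed. I would fix a basepoint $*$ on $\partial S$ and work with based homeomorphisms, so that $f\mapsto f_*$ lands in $\Aut(\pi_1(S,*))$ rather than in $\Out$. Given $\phi$, choose a based homeomorphism $f_0$ realizing its outer class and inducing some $\phi_0$, and write $\phi=\mathrm{ad}_g\circ\phi_0$ for a suitable $g\in F_2$. Since the point-pushing homeomorphism along a loop representing $g$ fixes $*$ and induces precisely the inner automorphism $\mathrm{ad}_g$, the composite of it with $f_0$ is a based homeomorphism inducing $\phi$ itself. This shows $\Aut(F_2)$ is geometric, and combining with the result of \S\ref{sec:geometric} finishes the proof.

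The main obstacle is precisely this last passage: guaranteeing that one realizes the \emph{specific} automorphism $\phi$ and not merely its outer class, by bookkeeping the basepoint and the inner automorphisms produced by point-pushing (and, if one uses the holed rather than the punctured torus, checking that the boundary Dehn twist contributes only the inner automorphism $\mathrm{ad}_{[a,b]}$). The identification $\Out(F_2)\cong GL_2(\Z)$ and its geometric realization are classical and I would simply cite them. The feature special to rank $2$ that makes the argument succeed — and that fails for $N\ge 3$, as Theorem~\ref{thm:main} shows — is exactly that \emph{all} of $\Out(F_2)$, hence all of $\Aut(F_2)$, is geometric.
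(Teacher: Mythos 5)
Your proposal is correct and is essentially the paper's own proof: Nielsen's theorem that every outer automorphism of $F_2$ is induced by a homeomorphism of the once-holed torus, combined with Proposition \ref{no-parabolics-in-geometrics}; the basepoint/point-pushing step you single out as the main obstacle is handled in the paper instead by the lift correspondence of \S\ref{sec:geometric}, under which the automorphisms in a geometric outer class correspond bijectively to the lifts $\tilde{f}$, each of whose boundary maps preserves the cyclic order on $\partial F_2$ and has a power with at least two fixed points, so no extra work is needed to pass from $\Out$ to $\Aut$. (One small aside: your closing remark that geometricity ``fails for $N\geq 3$, as Theorem \ref{thm:main} shows'' overreaches, since Theorem \ref{thm:main} concerns $N\geq 4$ and the paper leaves the $F_3$ case open.)
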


To our knowledge, the question of the existence
of automorphisms with a parabolic orbit is still open for $F_3$.

\bigskip

{\em Acknowledgments.}
I would like to express my gratitude to Gilbert Levitt, who has posed the question
of the existence of parabolic orbits as part of my thesis project, see\cite{H0}, 
and has consistently encouraged me to publish my results since then.
I would like to thank Pascal Hubert and Erwan Lanneau for helpful 
discussions about dilatation coefficients of matrices in $SL_2(\Z)$.
I am grateful to Martin Lustig for his active interest in the present paper.


\section{A first example}\label{sec:first}

For the impatient reader, we give a first example of an automorphism
of $F_4=\;<a,b,c,d>$ with a parabolic orbit ``inside $F_4$''
(using Proposition \ref{prop:omega}, this gives immediately a parabolic
orbit in $\partial F_4$).

Let $\phi$ be the automorphism defined by:
$$
\begin{array}{crcl}
\phi: & a & \mapsto & a \\  
& b & \mapsto & ba \\ 
& c & \mapsto & ca^2 \\ 
& d & \mapsto & dc.
\end{array}
$$ 

The inverse of $\phi$ is given by: 
$$
\begin{array}{crcl}
\phi^{-1}: & a & \mapsto & a \\  
& b & \mapsto & ba^{-1} \\ 
& c & \mapsto & ca^{-2} \\ 
& d & \mapsto & da^{2}c^{-1}.
\end{array}
$$

The common limit point of the forward and backward iteration of
$\phi$ (called a ``parabolic fixed point'') will be the element:
$ba^{-\infty}=ba^{-1}a^{-1}a^{-1}a^{-1}\dots\in\partial F_4.$
The element of $F_4$ which gives rise to a parabolic orbit with 
this limit point is $bd^{-1}$.
We calculate:
$$bd^{-1}\stackrel{\phi}{\mapsto}bac^{-1}\cdot d^{-1}
\stackrel{\phi}{\mapsto}bc^{-1}\cdot c^{-1}d^{-1}
\stackrel{\phi}{\mapsto}ba^{-1}c^{-1}\cdot a^{-2}c^{-1}c^{-1}d^{-1}
\stackrel{\phi}{\mapsto}ba^{-2}c^{-1}\cdot a^{-4}c^{-1}a^{-2}c^{-1}c^{-1}d^{-1}
\stackrel{\phi}{\mapsto}\dots
$$
$$b\cdot d^{-1}\stackrel{\phi^{-1}}{\mapsto}ba^{-1}\cdot ca^{-2}d^{-1}
\stackrel{\phi^{-1}}{\mapsto}ba^{-2}\cdot ca^{-4}ca^{-2}d^{-1}
\stackrel{\phi^{-1}}{\mapsto}ba^{-3}\cdot ca^{-6}ca^{-4}ca^{-2}d^{-1}
\stackrel{\phi^{-1}}{\mapsto}\dots
$$ 
In these calculations, we 
help the reader to follow through the iteration by introducing
an extra $\cdot$ which is ``mapped'' to the $\cdot$ in the next iteration
step. The crucial feature is that at any of these $\cdot$ no cancellation
does occur.
We see that $\lim_{k\rightarrow+\infty}\phi^k(bd^{-1})=
\lim_{k\rightarrow+\infty}\phi^{-k}(bd^{-1})=ba^{-\infty}$.
A more 
formal justification is given in \S \ref{sec:parabolic}.


\section{Basics}\label{sec:basics}

This section serves sort of as glossary: We summarize in a sequence of brief subsections the basic definitions and facts which are needed to follow the arguments in the subsequent sections. The expert reader is encouraged to skip the first few subsection (and to go back later to them, if need be). However, the terminology introduced in the last subsections is non-standard and should be read carefully.

\subsection{The induced boundary homeomorphism}

Let $\FN$ denote the free group of finite rank $N\geq2$. 
The boundary $\partial \FN$ of $\FN$ is a Cantor set.
If $\mathcal{A}=\{a_1,\dots,a_N\}$ is a basis of $\FN$,
we denote by $\mathcal{A}^{\pm 1}$ the set 
$\{a_1,\dots,a_N,a_1^{-1},\dots,a_N^{-1}\}$.
A word $w=w_1\dots w_p$ ($w_i\in\mathcal{A}^{\pm 1}$)
is {\em reduced} if $w_{i+1}\neq w_i^{-1}$.
The free group $\FN$ can be understood as the set of (finite) reduced words
in $\mathcal{A}^{\pm 1}$.
Then the boundary $\partial \FN$ is naturally identified to 
the set of (right) infinite reduced words 
$X=x_1\dots x_p\dots$ 
with $x_i\in\mathcal{A}^{\pm 1}$, $x_{i+1}\neq x_i^{-1}$.
The cylinder defined by a reduced word $w=w_1\dots w_p$ 
is the set of right-infinite reduced words 
$X=x_1\dots x_k\dots$ which admit $w$ as prefix:
$x_i=w_i$ for $i\in\{1,\dots,p\}$.
A basis of topology of $\partial \FN$ is given by the set of
all such cylinders.

An automorphism $\phi$ of a
free group $\FN$ induces a homeomorphism
$\partial\phi$ of the boundary $\partial \FN$. 
This can easily be checked by considering a standard set 
of generators of the
automorphisms group $\Aut(\FN)$ of $\FN$.
Alternatively, this can be seen as a consequence of the
fact that a quasi-isometry of a proper Gromov-hyperbolic
space induces a homeomorphism on the boundary of
this space, see \cite{Ghys}. 
Indeed, $\FN$ equipped with the word metric associated
to a basis $\mathcal{A}$, is a proper Gromov-$0$-hyperbolic 
space, and any automorphism of
$\FN$ is a quasi-isometry of $\FN$ with respect to this metric.

\subsection{Compactification of $\FN$}\label{sec:compactification}

Let $\BFN$ denote the union of $\FN$ and
its boundary $\partial\FN$, i.e. $\BFN=\FN\cup\partial\FN$.
Given a basis of $\FN$,
if $w$ is a reduced word, let $C_w$ be the set of
reduced finite or infinite words
which have $w$ as prefix.
A basis of topology of $\BFN$ is given by 
the finite subsets of $\FN$ and the sets $C_w$
(with $w$ describing all the reduced words of $\FN$).
Then $\BFN$ is a compact set,
and the inclusions of $\FN$ and $\partial\FN$
in $\BFN$ are embeddings.
If $\phi$ is an automorphism of $\FN$, 
$\overline{\phi}$ will denote the map defined by 
$\overline{\phi}(g)=\phi(g)$ if $g\in\FN$ and
$\overline{\phi}(X)=\partial\phi(X)$ if $X\in\partial\FN$.
The map $\overline{\phi}$ is a homeomorphism of $\BFN$.


\subsection{Getting rid of periodicity}\label{sec:periodicity}

Let $f$ be a homeomorphism of a topological space $\mathcal{X}$.
We denote by $\Fix(f)=\{x\in \mathcal{X} \;|\;  f(x)=x\}$ the set
of fixed points of $f$, and by 
$\Per(f)=\bigcup_{k\in\N}\Fix(f^k)$
the set of periodic points of $f$.

Levitt and Lustig have proved in \cite{LL2} that there exists
an integer $p$, which depends only on the rank $N$ of $\FN$,
such that for all $\phi\in\Aut(\FN)$, the periodic points of
$\overline{\phi}^p$ are fixed points: 
$\Fix(\overline{\phi}^p)=\Per(\overline{\phi}^p)$.
This result has been refined by Feighn and Handel 
in \cite{FH}, where the notion of ``forward rotationless''
outer automorphism has been introduced.
This lead us to say, in this paper, that an automorphism 
$\phi\in\Aut(\FN)$ is {\em rotationless} if
$\Fix(\overline{\phi})=\Per(\overline{\phi})$.
The previously mentioned result can be rephrased as follows:

\begin{theorem}[Levitt-Lustig]
Any automorphism $\phi\in\Aut(\FN)$ has
a power $\phi^p$ ($p\in\N$) which is rotationless.
\end{theorem}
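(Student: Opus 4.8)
The plan is to deduce the statement directly from the result of \cite{LL2} recalled just above, which supplies an integer $p$, depending only on $N$, with $\Fix(\overline{\phi}^p)=\Per(\overline{\phi}^p)$ for every $\phi\in\Aut(\FN)$. The only real work is to match this equality against the definition of ``rotationless'' for the automorphism $\psi=\phi^p$. The key compatibility is that passing to the boundary extension respects composition: since $\partial(\phi^p)=(\partial\phi)^p$ by functoriality of the induced boundary homeomorphism, and $\phi^p$ is the $p$-th iterate on $\FN$, the two homeomorphisms $\overline{\phi^p}$ and $\overline{\phi}^p$ agree on both $\FN$ and $\partial\FN$, hence $\overline{\phi^p}=\overline{\phi}^p$ as homeomorphisms of $\BFN$.

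First I would record the tautological identity $\Per(\overline{\phi}^p)=\bigcup_{k\in\N}\Fix(\overline{\phi}^{pk})$, which is nothing but the definition $\Per(g)=\bigcup_{k\in\N}\Fix(g^k)$ applied to $g=\overline{\phi}^p$, using $(\overline{\phi}^p)^k=\overline{\phi}^{pk}$. This makes explicit that the set on the right of the Levitt--Lustig equality is genuinely the periodic-point set of the $p$-th power.

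Then, with $\psi=\phi^p$, the assertion that $\psi$ is rotationless reads $\Fix(\overline{\psi})=\Per(\overline{\psi})$. Substituting $\overline{\psi}=\overline{\phi^p}=\overline{\phi}^p$ turns this into $\Fix(\overline{\phi}^p)=\Per(\overline{\phi}^p)$, which is precisely the equality furnished by \cite{LL2}. Thus $\phi^p$ is rotationless, and the theorem follows.

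I do not expect a genuine obstacle here. The entire mathematical content, namely the existence of a single exponent $p$ that works uniformly in $\phi$, is exactly the theorem of Levitt and Lustig quoted above, and the present statement is merely its reformulation in the terminology adopted in this paper. The only care needed is the bookkeeping $\overline{\phi^p}=\overline{\phi}^p$ together with the unwinding of $\Per$; neither step requires more than a line, so the difficulty is entirely absorbed into the cited result.
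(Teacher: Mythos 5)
Your proposal is correct and follows exactly the paper's route: the paper states this theorem with no separate proof, presenting it as a direct rephrasing of the cited result of Levitt--Lustig from \cite{LL2} that $\Fix(\overline{\phi}^p)=\Per(\overline{\phi}^p)$ for a uniform exponent $p$ depending only on $N$. Your only additions --- the identification $\overline{\phi^p}=\overline{\phi}^p$ and the unwinding of $\Per(\overline{\phi}^p)=\bigcup_{k\in\N}\Fix(\overline{\phi}^{pk})$ --- are the implicit bookkeeping the paper leaves to the reader, so there is nothing to fault.
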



\subsection{Nature of fixed points}\label{sec:nature}

Let $\phi$ be a rotationless automorphism of $\FN$.
The set $\Fix(\phi)$ is a subgroup of $\FN$, which is called
the {\em fixed subgroup} of $\phi$.
This fixed subgroup has finite rank, see \cite{Coo}. More precisely, it is
proved in \cite{BH} that $\rank(\Fix(\phi))\leq N$. 
In particular, $\Fix(\phi)$ is a quasiconvex subgroup of $\FN$,
and thus its boundary $\partial\Fix(\phi)$ naturally injects into
$\partial\FN$.
By continuity of $\overline{\phi}$, every point of 
$\partial\Fix(\phi)$ is contained in $\Fix(\partial\phi)$.
Following Nielsen, 
these fixed points of $\partial\phi$
are called \textit{singular}; the fixed points of $\partial\phi$ which
are not singular are called \textit{regular}.

A fixed point $X$ of $\partial\phi$ is 
\textit{attracting} if there exists a neighbourhood $U$ of $X$ in
$\overline{F}_N$ such that the sequence $\overline{\phi}^k(x)$
converges to $X$ for all $x$ in $U$.
A fixed point $X$ of $\partial\phi$ is 
\textit{repulsing} if it is attracting for $\partial{\phi}^{-1}$.
It is proved in \cite{GJLL} that:

\begin{lemma}\label{lem:regular}
Let $\phi\in\Aut(F_N)$. A regular fixed point of $\partial\phi$ is
either attracting or repulsing.
\qed
\end{lemma}

However, outside of the regular fixed point set, i.e. for singular fixed points, the dynamics can be quite a bit more complicated. In particular, there may exist {\em mixed} fixed points, i.e. fixed points which serve as attractor for some orbits, and simultaneously as repeller for others.  This phenomenon is rather common; some concrete examples will be spelled out in the subsequent sections.

A particular case of a mixed fixed point is the case (defined in the Introduction) of a parabolic fixed point.  Thus we obtain as special case the following consequence of Lemma \ref{lem:regular}:

\begin{remark}\label{rem:singular}
Any parabolic fixed point
of $\phi$ is singular.
\end{remark}

\subsection{Limit points}\label{sec:limit}

Let $\phi$ be a rotationless automorphism of $\FN$.
For any $x\in\BFN$, if the limit 
$\lim_{k\rightarrow+\infty}\overline{\phi}^k(x)$
exists,
we denote it by $\omega_{\phi}(x)$.
In \cite{LL3}, Levitt and Lustig have proved:

\begin{theorem}[Levitt-Lustig]\label{th:asymptotic}
Let $\phi\in\Aut(\FN)$ be rotationless.
Then for any $x\in\BFN$ the sequence
$\overline{\phi}^{k}(x)$ converges to some element
$\omega_{\phi}(x)\in\Fix(\overline{\phi})$.
\qed
\end{theorem}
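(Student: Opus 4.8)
The plan is to realize $\phi$ on a train-track model, show that the forward orbit of every point has prefixes that eventually freeze, and then read off convergence to a fixed point; rotationlessness will be used only at the very end, to rule out asymptotic oscillation. First I would trade the algebraic picture for a topological one: by Bestvina--Handel \cite{BH} (or, to accommodate polynomial growth, by the improved relative train track maps of Bestvina--Feighn--Handel) choose a marked graph $G$ with a homotopy equivalence $f:G\to G$ inducing $\phi$ under an identification $\FN\cong\pi_1(G)$, where $f$ is a relative train track map for a filtration $\emptyset=G_0\subset G_1\subset\cdots\subset G_r=G$. Passing to the universal cover, points of $\BFN$ become reduced finite or infinite edge-paths and $\overline{\phi}$ is induced by the lift $\widetilde f$. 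The value of the train-track hypothesis is that $f$ does not fold legal paths, so cancellation under iteration is confined to the finitely many illegal turns and the growth of a path is governed stratum by stratum. I would also record Cooper's bounded-cancellation lemma \cite{Coo}: there is a constant $C=C(\phi)$ such that for reduced $u,v$ with $uv$ reduced, the reduced form of $\phi(u)\phi(v)$ is obtained from the concatenation by deleting at most $C$ letters at the junction. This is the tool that keeps an already-established initial segment from being eroded by what follows.

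The core of the argument is the stabilization of the prefixes of $\overline{\phi}^k(x)$, which I would prove by induction on the strata. In an exponentially growing stratum, iterating $f$ on a path quickly produces a long \emph{legal} segment in that stratum, which is then reproduced without cancellation at each further step; its initial part is thereby frozen and the forward orbit converges to the boundary point determined by the corresponding eigenray, an attracting fixed point. In a polynomially growing (parabolic) stratum the path length grows only polynomially, and here one must show that the constant $C$, contributing at most a bounded amount of erosion per step at each junction, cannot keep pace with the steadily lengthening frozen prefix inherited from the lower strata; hence an ever-longer initial segment of $\overline{\phi}^k(x)$ becomes independent of $k$, and the limit $\omega_\phi(x)$ exists in $\BFN$.

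Once the prefixes stabilize, $\overline{\phi}^k(x)$ converges to a well-defined $\omega_\phi(x)\in\BFN$, and continuity of $\overline{\phi}$ forces $\overline{\phi}(\omega_\phi(x))=\lim_k\overline{\phi}^{k+1}(x)=\omega_\phi(x)$, so the limit automatically lies in $\Fix(\overline{\phi})$. Rotationlessness enters precisely to guarantee that the stabilization argument produces genuine convergence rather than cycling among a finite periodic orbit: if the accumulation set were a periodic orbit of period $m>1$, its points would belong to $\Per(\overline{\phi})\smallsetminus\Fix(\overline{\phi})$, contradicting $\Fix(\overline{\phi})=\Per(\overline{\phi})$.

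I expect the polynomially growing strata to be the main obstacle. There the competition is genuinely quantitative: the prefix inherited from the lower strata lengthens only polynomially while cancellation may accumulate linearly across the junctions, so one needs a careful comparison to certify that the net effect still freezes a growing prefix, rather than the crude exponential-domination estimate that suffices in the hyperbolic strata. This is exactly the regime in which the parabolic orbits constructed in the present paper live, which is why this step must be handled with care.
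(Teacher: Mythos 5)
The paper does not prove this statement at all: it is quoted verbatim from Levitt--Lustig \cite{LL3} with a \qed, and its proof there (the ``asymptotically periodic dynamics'' paper) is a substantial piece of work in its own right. So your proposal must be judged on its own merits, and while its opening moves (relative train tracks, bounded cancellation, stratum-by-stratum analysis) are indeed in the spirit of how \cite{LL3} and \cite{GJLL} operate, it has two genuine gaps. The more serious one is the final step, where rotationlessness is invoked: it is a non sequitur. The $\omega$-limit set of an orbit of a homeomorphism of a compact space is merely a closed invariant set; it need not be a finite periodic orbit, nor consist of periodic points at all. Knowing $\Per(\overline{\phi})=\Fix(\overline{\phi})$ therefore does not exclude an orbit that accumulates simultaneously on two distinct fixed points, or on non-periodic points --- and excluding exactly this is the entire content of the theorem, not a cleanup step. (Note also that for a general automorphism the correct conclusion in \cite{LL3} is that $\overline{\phi}^{kq}(x)$ converges for some bounded $q$; even granting convergence of a subsequence, getting $q=1$ from rotationlessness requires an argument, not just the definition.)

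The second gap is the step you yourself flag as ``the main obstacle'': the claim that in polynomially growing strata the erosion at junctions ``cannot keep pace with the steadily lengthening frozen prefix'' is asserted, not proved, and as stated it is not even plausible without further structure. Bounded cancellation controls one application of $\phi$ at one junction; over $k$ iterations the erosion can accumulate linearly in $k$, while the ``inherited prefix'' need not grow at all a priori (a prefix lying in a pointwise-fixed bottom stratum contributes no growth, and an infinite ray can cross infinitely many illegal turns, so no single constant controls cancellation along the whole ray of a boundary point). Moreover, in exponential strata legalization is not automatic: illegal turns can persist forever along (pre-)Nielsen paths, so ``a long legal segment is quickly produced'' fails for precisely the paths that matter. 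What actually replaces this hand-waving in practice is hard-splitting technology --- compare Lemma \ref{lem:split} in this paper, extracted from Theorem 5.1.8 of \cite{BFH1}, where the splitting is by definition preserved without cancellation under \emph{all} forward iterates; establishing the existence of such splittings for arbitrary points of $\BFN$, and then deriving convergence (rather than oscillation between fixed points), is where the real proof lives. Your outline identifies the right battlefield but does not fight the battle.
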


A point $X\in\partial\Fix(\phi)$ is a {\em $\omega$-limit point} of $\phi$
if there exists $x\in\BFN$ such that 
$X=\omega_{\phi}(x)$.
A point $X\in\partial\Fix(\phi)$ is a {\em limit point} of $\phi$
if it is a $\omega$-limit point of $\phi$ or $\phi^{-1}$.
Let $L^\omega_\phi$ denote the set of 
$\omega$-limit points of $\phi$ and 
let $L_\phi$ denote the set of 
limit points of $\phi$.

For any $g\in\FN$, $g\neq 1$, the sequence $g^k$ has a limit
in $\partial\FN$ when $k\rightarrow+\infty$: this limit is
denoted by $g^\infty$.

\begin{proposition}\label{prop:omega}
Let $\phi\in\Aut(\FN)$ be a rotationless automorphism.
If $g\in\FN\smallsetminus\Fix(\phi)$,
then:
$$\omega_\phi(g)=\omega_{\phi}(g^\infty).$$
\end{proposition}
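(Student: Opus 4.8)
The plan is to reduce the statement to a single convergence fact about infinite powers and then prove that fact by comparing prefixes. By Theorem~\ref{th:asymptotic} both $\omega_\phi(g)$ and $\omega_\phi(g^\infty)$ exist, so it suffices to identify the two limits. The first step is to move the power past the automorphism: since $g^\infty=\lim_{n\rightarrow+\infty}g^n$ in $\BFN$ and $\overline{\phi}$ is a homeomorphism, for every fixed $k$ one has $\overline{\phi}^k(g^\infty)=\lim_n\overline{\phi}^k(g^n)=\lim_n(\phi^k(g))^n=(\phi^k(g))^\infty$. Writing $g_k:=\phi^k(g)$ and $X:=\omega_\phi(g)$, this turns the proposition into the assertion $\lim_k g_k^\infty=X$, i.e.\ that the infinite powers of the $g_k$ converge to the same boundary point as the $g_k$ themselves.

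Next I would check that $X\in\partial\FN$ rather than being a finite point. In the topology of $\BFN$ described in \S\ref{sec:compactification} every singleton $\{h\}$ with $h\in\FN$ is open, so $g_k\to h\in\FN$ would force $g_k=h$ for all large $k$; combined with $\overline{\phi}(g_k)=g_{k+1}$ this gives $h\in\Fix(\phi)$, and then $g=\phi^{-k}(h)=h\in\Fix(\phi)$, contradicting $g\notin\Fix(\phi)$. Hence $X$ is an infinite reduced word, and convergence to $X$ in $\BFN$ means that the $g_k$ share longer and longer prefixes with $X$.

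The heart of the argument is the purely combinatorial claim that $g_k\to X$ implies $g_k^\infty\to X$. Here I would write each $g_k$ in its cyclic form $g_k=u_k h_k u_k^{-1}$ with $h_k$ cyclically reduced and $u_k$ of maximal length, so that $g_k^\infty=u_k h_k^\infty$. Setting $\ell_k:=|g_k|$ one has $\ell_k=2|u_k|+|h_k|$, and the reduced words $g_k=u_k h_k u_k^{-1}$ and $g_k^\infty=u_k h_k^\infty$ share the common prefix $u_k h_k$ of length $\ell_k-|u_k|=|u_k|+|h_k|$. The one point that needs care is that this common length tends to infinity; this is exactly where the conjugating prefix $u_k$ is the potential obstacle, but the elementary bound $\ell_k-|u_k|=|u_k|+|h_k|\ge\tfrac12(2|u_k|+|h_k|)=\ell_k/2$ together with $\ell_k\to+\infty$ settles it. Consequently, for every $m$ and all large $k$ the first $m$ letters of $g_k^\infty$ agree with those of $g_k$, which in turn agree with the first $m$ letters of $X$ because $g_k\to X$. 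Thus $g_k^\infty\to X$, and the proposition follows.
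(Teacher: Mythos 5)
Your proof is correct and takes essentially the same route as the paper: your estimate that $g_k$ and $g_k^\infty$ share the prefix $u_kh_k$ of length $\ell_k-|u_k|\ge\ell_k/2$, obtained from the cyclic decomposition $g_k=u_kh_ku_k^{-1}$, is exactly the paper's observation that the Gromov product $(\phi^k(g),(\phi^k(g))^\infty)$ is at least $\frac{1}{2}(|\phi^k(g)|+1)$, combined with Theorem~\ref{th:asymptotic}. You merely make explicit two points the paper leaves implicit, namely the commutation $\overline{\phi}^k(g^\infty)=(\phi^k(g))^\infty$ and the reason why $|\phi^k(g)|\to\infty$ (the limit cannot lie in $\FN$ since singletons are open and $g\notin\Fix(\phi)$), which is a welcome addition rather than a deviation.
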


\begin{proof}
The proof is a simple adaptation of the arguments in the proof 
of \cite[Proposition 2.3]{LL1}. We fix a basis $\mathcal{A}$ of $\FN$.
We note that for all $g\in\FN\smallsetminus\{1\}$,
the Gromov product $(g,g^\infty)$
(i.e. the length of longest common prefix)
of $g$ and $g^\infty$ is bigger than $\frac{1}{2}(|g|+1)$
(where $|g|$ denotes the length of $g$ in the basis $\mathcal{A}$).
If $g\notin\Fix(\phi)$, then the length 
of $\phi^k(g)$,
and thus also the Gromov product $(\phi^k(g),(\phi^k(g))^\infty)$,
tend to infinity.
Theorem \ref{th:asymptotic} implies that 
$\omega_\phi(g)=\omega_{\phi}(g^\infty)$.
\end{proof}

Proposition \ref{prop:omega} shows that 
$L^\omega_\phi=\{\omega_\phi(X)\;|\; X\in\partial\FN\}$.
We do not know whether
$L^\omega_\phi=\{\omega_\phi(g)\;|\; g\in\FN\}$ holds.

\subsection{Isoglossy classes}\label{sec:isoglossy}

For any $\phi\in\Aut(\FN)$, 
two points $X,Y\in\partial\FN$ are called {\em isogloss}
(with respect to $\phi$) if there exists some $g\in\Fix(\phi)$
such that $X=gY$. 
It follows directly from this definition that 
isoglossy is an equivalence
relation. 
The fixed subgroup $\Fix(\phi)$ acts naturally on the
fixed point set
$\Fix(\partial\phi)$, which is thus naturally 
partitioned into isoglossy classes.
If $X,Y\in\Fix(\partial\phi)$ are isogloss, then they
are of same ``dynamical type'': they are simultaneously
singular, attracting, repulsing, mixed, 
parabolic or limit points.

\subsection{Dynamics graph}\label{sec:graph}

Let $\phi\in\Aut(\FN)$ be a rotationless automorphism. 
We associate to $\phi$ a graph $\Gamma_\phi$, called
the {\em dynamics graph} of $\phi$.
The vertices of $\Gamma_\phi$ are the isoglossy classes
of points of $L_{\phi}$. There is an oriented edge from the 
isoglossy class $x_1$ to the isoglossy class $x_2$ if there exists some representatives
$X_i$ of $x_i$ and $X\in\partial \FN$ such that
$\omega_{\phi^{-1}}(X)=X_1$ and
$\omega_{\phi}(X)=X_2$.
The main theorem of \cite{H0} states that
$\Gamma_\phi$ is a finite graph. 
We give in Figure \ref{fig:ns} the dynamics graph of 
an automorphism which has North-South dynamics
on $\partial\FN$.

\begin{figure}[h]
\begin{center}
$ \xymatrix{ 
\bullet \ar[d] \\
\bullet
}$
\caption{North-South dynamics graph} \label{fig:ns}
\end{center}
\end{figure}
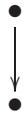

Finally, we note that, for a rotationless automorphism $\phi$,
the existence of parabolic orbit is equivalent to the
fact that there is an edge of the dynamics graph $\Gamma_\phi$ 
which is a loop.

\begin{remark}
In \cite{Lev} G. Levitt introduces a graph 
in order to code the dynamics of 
so-called ``simple-dynamics homeomorphisms'' of the Cantor set $C$: 
a homeomorphism $f:C\rightarrow C$ has {\em simple dynamics} if 
the set $\Fix(f)$ of its fixed points is finite,
and if the sequence $f^n$ uniformly converges on any compact set
disjoint from $\Fix(f)$. 
If $\phi\in\Aut(\FN)$ is a rotationless automorphism
with trivial fixed subgroup, 
then $\partial\phi^p$ has simple dynamics, 
and the graph $\Gamma_{\phi}$ is the same as the one defined in
\cite{Lev}.  In this case, the fixed points of $\partial\phi$ are 
either attracting or repulsing.
Thus, if one is interested in parabolic orbits, which are the main focus of 
the present paper, one has to purposefully leave to world of 
``simple dynamics'' homeomorphisms.
\end{remark}


\section{Examples}\label{sec:examples}

\subsection{Inner automorphisms}\label{sec:inner}

Let $i_u\in\Aut(\FN)$ denote the {\em conjugation}, 
or {\em inner automorphism}, by $u\in \FN$, i.e.  
$i_u(g)=ugu^{-1}$ for all $g\in \FN$. 
The set $\Inn(\FN)$ of inner automorphisms of $\FN$
is a normal subgroup of $\Aut(\FN)$. The quotient
group, denoted by $\Out(\FN)$, is the group of
{\em outer automorphisms} of $\FN$.

The homeomorphism $\partial i_u:\partial \FN \rightarrow \partial \FN$
induced by $i_u$ is the left translation by $u$: 
$\partial i_u(X)=uX$.
If $u\neq 1$, the map $\partial i_u$ has precisely 2 fixed points:
$u^\infty$ and $u^{-\infty}$ (where $u^\infty$ is the limit
of the sequence $u^k$, and $u^{-\infty}$ is is the limit of 
the sequence $u^{-k}$, for $k\rightarrow +\infty$).
Moreover, for any point $X\in\partial\FN$ different from 
$u^{-\infty}$, the sequence $\partial\phi^k(X)$ converges
to $u^\infty$ when $k$ tends to infinity.
One checks easily that the map $\partial i_u$ has North-South dynamics,
from $u^{-\infty}$ to $u^\infty$, on $\partial\FN$,
see \cite{LL1} for instance.

\begin{remark}
We note that the fixed subgroup of $i_u$ is cyclic,
generated by the root of $u$ (i.e. the element $v\in\FN$
such that $u=v^p$ with $p\in\N$ maximal).
In particular, $u^\infty$ and $u^{-\infty}$ 
are singular fixed points of $i_u$.
This shows that
when defining ``$X$ is an attracting fixed point of $\phi$''
in \S \ref{sec:nature}, 
it makes a crucial difference that we request the neighbourhood 
$U$ of $X$ to be taken in $\BFN$ 
and not just in $\partial\FN$.
\end{remark}


\subsection{Geometric automorphisms}\label{sec:geometric}

Let $\Sigma$ be a compact surface with fundamental group $\pi_1(\Sigma)$
isomorphic to $\FN$ (in particular, $\Sigma$ has non empty boundary).
The surface $\Sigma$ can be equipped with a hyperbolic metric (i.e. a metric
of constant curvature equal to $-1$) in such a way that
every boundary component of the boundary of $\Sigma$ is a geodesic.
The universal cover $\tilde{\Sigma}$ of $\Sigma$ is then identified with a closed
convex subset of the hyperbolic plane $\Hyp^2$, and
the Gromov boundary $\partial\tilde{\Sigma}$ of $\tilde{\Sigma}$,
which is naturally identified with the boundary $\partial\FN$ of $\FN$, injects in
the boundary (or circle at infinity) $S_\infty$ of $\Hyp^2$. 
Since $S_\infty$ is a circle, it can be equipped with a natural cyclic order.
This order on $S_\infty$ induces a cyclic order on $\partial\FN$.

In his fundamental work \cite{Nie1,Nie2,Nie3}, Nielsen proposed an original
and fruitful point of view to study homeomorphisms of surfaces. The basic idea
is that the behaviour of a homeomorphism $f$ of 
a surface $\Sigma$ is well reflected by 
the collection of all the lifts $\tilde{f}$ of $f$ to $\tilde{\Sigma}$
which have each much simpler individual behaviour. 
This idea is at the origin of what is now called ``Nielsen-Thurston classification''
of homeomorphisms of surfaces, see \cite{HT},
and it has much influenced the study of (outer) automorphisms of free
groups, see \cite{GJLL,FH,HM}.
The key fact is that
any lift $\tilde{f}$ of $f$ induces a homeomorphism $\partial\tilde{f}$
of $\partial\tilde{\Sigma}$.
A basic (but rather fundamental) remark is that
$\partial\tilde{f}$ preserves the cyclic order on 
$\partial\tilde{\Sigma}\subseteq S_\infty$.

An homeomorphism $f$ of $\Sigma$ induces an outer automorphism of 
$\pi_1(\Sigma)$, and thus an outer automorphism $\Phi\in\Out(\FN)$
(in fact, this outer automorphism $\Phi$ only depends on the mapping class
of $f$). Such an outer automorphism $\Phi$ of $\FN$ 
(and also any automorphism $\phi\in\Phi$) 
is called {\em geometric}.
Classical Galois theory for covering spaces states that the lifts of $f$ are in
bijective correspondance with the automorphisms in the outer class $\Phi$.
More precisely, an automorphism $\phi\in\Phi$ and a lift $\tilde{f}$ of $f$ are 
in correspondance if, and only if, 
$$\phi(g)\circ \tilde{f}=\tilde{f}\circ g \quad\forall g\in F_N,$$ 
where the elements of $F_N$ are considered as deck
transformations of $\tilde{\Sigma}$. 
As a consequence, the dynamics of $\partial\tilde{f}$ on $\partial\tilde{\Sigma}$
and the dynamics of $\partial\phi$ on $\partial \FN$ 
are conjugated via the natural identification
between $\partial\tilde{\Sigma}$ and $\partial \FN$.

It follows from the previous discussion that, for any geometric automorphism
$\phi\in\Aut(\FN)$, the homeomorphism $\partial\phi$ of $\partial\FN$
must preserve a cyclic order on $\partial\FN$.

Another 
fact proved by Nielsen is that $\partial\tilde{f}$ has at least 2 periodic
points on $\partial\FN$ (for a proof in the context of free groups, see \cite{LL2}).
This means that there exists a positive power of $\partial\tilde{f}$ which has
at least 2 fixed points on $\partial\FN$.
Both these facts (existence of 2 fixed points and preservation of a cyclic
order) yield directly:

\begin{proposition}
\label{no-parabolics-in-geometrics}
A geometric automorphism of $\FN$ can not have a parabolic
orbit in $\partial\FN$.
\end{proposition}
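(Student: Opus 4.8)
The plan is to show that a parabolic orbit is incompatible with the two facts Nielsen established for geometric automorphisms: that $\partial\tilde f$ preserves a cyclic order on $\partial\FN$, and that $\partial\tilde f$ has at least two periodic points. Suppose, for contradiction, that a geometric automorphism $\phi$ has a parabolic orbit. Passing to a suitable power if necessary, I may assume $\phi$ is rotationless, and I must check this passage is harmless: since $\phi$ is geometric, so is every power $\phi^n$ (the corresponding lift is $\tilde f^{\,n}$), and a parabolic orbit for $\phi$ yields a parabolic orbit for $\phi^n$, as $\lim_{k\to\pm\infty}\partial\phi^{nk}(Y)=X$ still holds. So it suffices to derive the contradiction under the rotationless hypothesis, where I have a parabolic fixed point $X$ and a point $Y\neq X$ with $\lim_{k\to\pm\infty}\partial\phi^k(Y)=X$.

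The heart of the argument is an order-theoretic squeeze. Let $f=\partial\phi$, identify $\partial\FN$ with its image in the circle $S_\infty$, and let $\sigma$ be the induced cyclic order, which $f$ preserves. By Nielsen's second fact, $f$ (after replacing by a further power, which is again geometric) has a second fixed point $Z\neq X$ on $\partial\FN$. First I would consider the orbit $\{f^k(Y)\}$: since $Y\neq X=\lim_{k\to\pm\infty}f^k(Y)$, the point $Y$ is not itself fixed, so the orbit is genuinely biinfinite and accumulates only at $X$ from both ends. Now fix the three distinct points $Y$, $f(Y)$, and $Z$; their cyclic order is preserved by every iterate $f^k$, so the triple $(f^k(Y),f^{k+1}(Y),Z)$ has the same cyclic orientation for all $k\in\Z$. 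As $k\to+\infty$ both $f^k(Y)$ and $f^{k+1}(Y)$ converge to $X$, and as $k\to-\infty$ they again both converge to $X$; the point $Z$ stays fixed away from $X$. The strategy is to show that this forces the consecutive pairs $(f^k(Y),f^{k+1}(Y))$ to wind monotonically around the circle relative to $Z$, yet to return to a neighbourhood of $X$ at both ends of time — which is impossible on a circle with the order rigidly preserved.

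More concretely, I expect the cleanest route is to separate the circle using the two fixed points $X$ and $Z$, which cut $S_\infty$ into two open arcs $I_1,I_2$. Since $f$ fixes both $X$ and $Z$ and preserves cyclic order, $f$ maps each arc $I_j$ to itself. The orbit of $Y$ lies in one arc, say $I_1$, and $f|_{I_1}$ is an orientation-preserving self-homeomorphism of an open arc whose endpoints are the fixed points $X,Z$. On such an arc the dynamics of an order-preserving map is monotone: for each point either the forward orbit moves steadily toward one endpoint and the backward orbit toward the other, or the point is fixed. Hence $\lim_{k\to+\infty}f^k(Y)$ and $\lim_{k\to-\infty}f^k(Y)$ must be the \emph{two different} endpoints $X$ and $Z$ — they cannot both equal $X$. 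This directly contradicts the defining property of a parabolic orbit, completing the proof.

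The main obstacle, and the step I would be most careful about, is the arc-monotonicity claim: I must verify that an orientation-preserving homeomorphism of an open arc, extended continuously to fix both endpoints, indeed has every non-fixed point escaping to opposite endpoints in forward and backward time. This is the one-dimensional analogue of the fact that a fixed-point-free lift of a circle homeomorphism is monotone, and it hinges on the fact that between the two endpoints there is no room for an orbit to turn around without violating the order. A secondary technical point is ensuring $Y$ and its whole orbit really lie in a single component $I_1$ rather than sitting at a fixed point; but $Y$ is not fixed precisely because its orbit has the nontrivial limit behaviour of a parabolic orbit, so $Y\in I_1\setminus\{X,Z\}$, and order preservation keeps the entire orbit in $I_1$.
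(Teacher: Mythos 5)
Your proposal is correct and takes essentially the same route as the paper, whose proof consists precisely of observing that the two Nielsen facts (preservation of a cyclic order on $\partial\FN$, and existence of two periodic --- hence, after passing to a power, fixed --- points) ``yield directly'' the proposition; your arc-splitting and monotonicity argument simply fills in that step, including the correct verification that powers of geometric automorphisms are geometric and inherit the parabolic orbit. One small repair: if the invariant arc contains further fixed points, the forward and backward limits of $Y$ are the two distinct endpoints of the maximal fixed-point-free subinterval containing $Y$, not necessarily $X$ and $Z$ themselves --- but order-preservation forces the orbit of a non-fixed point to be monotone, so the two limits are in any case distinct, which is all the contradiction with $\lim_{k\rightarrow\pm\infty}\partial\phi^k(Y)=X$ requires.
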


This fact is particularly meaningful for the free group of 
rank 2. Indeed, it is well known that any outer automorphism of $F_2$
can be induced by a homeomorphism of a torus with one boundary component,
see \cite{Nie0}.
This is precisely how  
Proposition \ref{prop:F2}
is proved.

\subsection{Outer automorphisms}
\label{sec:outer-autos}
Although well known, we believe that at this point it might be wise 
to alert the less expert reader about a common misunderstanding.
It is by no means true that any two automorphisms $\phi, \phi'$ which 
belong to the same outer automorphism class $\Phi$, 
must have conjugated dynamics. Indeed, their dynamics graphs 
$\Gamma_\phi$ and $\Gamma_{\phi'}$ may look quite different. 
Concrete examples are easy to come by, and some are given in the 
subsequent sections.

The reader who wants to be more subtle can easily check that 
indeed some automorphisms in $\Phi$ have naturally conjugated dynamics. 
The resulting {\em isogredience classes} go again all the way back to Nielsen
(see also \cite{LL1}), 
and one could associate to $\Phi$ a {\em total dynamics graph} 
which is the disjoint union of the $\Gamma_\phi$ over a set of representatives 
for the single isogredience classes. However, this goes beyond the scope of this paper.


\section{Parabolic orbits}\label{sec:parabolic}

\subsection{Structure of a parabolic fixed point}\label{sec:structure}

Let $\phi\in\Aut(\FN)$ be an automorphism, 
and $X\in\Fix(\partial\phi)$ be a parabolic fixed point for $\phi$.
We have seen (cf Remark \ref{rem:singular}) that $X$
must be singular. 
A point $X\in\partial\FN$ is {\em rational} if it a fixed
point of an inner automorphism, i.e. $X=u^\infty$
for some $u\in\FN\smallsetminus\{1\}$.
It is proved in \cite{H0} that singular limit points of $\phi$
are rational. 
We deduce the following:

\begin{lemma}
A parabolic fixed point $X$ of $\phi\in\Aut(\FN)$ is a singular 
rational point:
$X=u^\infty$ with $u\in\Fix(\phi)$.
\qed
\end{lemma}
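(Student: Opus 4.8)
The plan is to establish the three assertions---$X$ singular, $X$ rational, and the witness lying in $\Fix(\phi)$---in turn. That $X$ is singular is precisely Remark \ref{rem:singular}; by the definition of singularity in \S\ref{sec:nature} this records that $X\in\partial\Fix(\phi)$, a membership I will use at the end. For rationality, note that a parabolic orbit supplies a point $Y\neq X$ with $\lim_{k\to\pm\infty}\partial\phi^k(Y)=X$; in particular $X=\omega_\phi(Y)$, so $X$ is an $\omega$-limit point, hence a limit point of $\phi$ in the sense of \S\ref{sec:limit}. Being a singular limit point, $X$ is rational by the result of \cite{H0} quoted above: $X=u^\infty$ for some $u\in\FN\smallsetminus\{1\}$.

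It remains to arrange $u\in\Fix(\phi)$. Set $H=\Fix(\phi)$, which is finitely generated of rank $\le N$ by \cite{BH}. The content is the following claim: \emph{if a rational point $u^\infty$ lies in $\partial H$, then $u^\infty=v^\infty$ for some $v\in H\smallsetminus\{1\}$.} Granting this, singularity gives $X=u^\infty\in\partial\Fix(\phi)=\partial H$, so $X=v^\infty$ with $v\in\Fix(\phi)$; renaming $v$ as $u$ yields the statement.

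To prove the claim I would work with the finite folded Stallings graph $\Gamma$ of $H$, based at a vertex $*$: its reduced based loops are exactly the elements of $H$, and its infinite reduced rays from $*$ read off the points of $\partial H$. Read $u^\infty$ as such a ray $\rho$. Since $\Gamma$ is folded the ray is deterministic, and since $u^\infty$ is eventually periodic (with period the cyclic reduction of $u$) the finiteness of the vertex set forces $\rho$ to be eventually periodic as a path: after an initial segment $\pi$ from $*$ to some vertex $x$ it cycles around a nontrivial closed loop $\lambda$ at $x$. The based loop $\pi\lambda\pi^{-1}$ represents some $v\in H\smallsetminus\{1\}$, and since $\pi\lambda\lambda\lambda\cdots$ still reads $u^\infty$ we get $v^\infty=u^\infty$, proving the claim. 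The one genuine obstacle is exactly this promotion of a point that is rational in $\FN$ to one that is rational already inside the subgroup $H$; everything preceding it is bookkeeping. A quasiconvexity argument (the inclusion $H\hookrightarrow\FN$ is a quasi-isometric embedding, so a periodic ray landing in $\partial H$ must be asymptotic to the axis of an element of $H$) gives an alternative route, but the combinatorial argument above is more self-contained.
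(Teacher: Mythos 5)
Your proof is correct and follows the paper's own route: singularity of $X$ is exactly Remark \ref{rem:singular}, rationality comes from the quoted result of \cite{H0} that singular limit points are rational, and the paper then states the lemma with no further argument. Your Stallings-graph argument promoting a rational point $u^\infty\in\partial\Fix(\phi)$ to one of the form $v^\infty$ with $v\in\Fix(\phi)$ correctly supplies the one step the paper treats as immediate, so it is a filled-in version of the same deduction rather than a different approach.
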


Moreover, we have:

\begin{proposition}
Let $\phi$ be an automorphisme of $\FN$, and $X\in\Fix(\partial\phi)$ 
be a parabolic fixed point for $\phi$. 
Then any neighborhood of $X$ in $\partial \FN$ contains
a full orbit 
$\{\partial\phi^k(Y)\; |\; k\in\Z \} \subset \partial\FN$
\end{proposition}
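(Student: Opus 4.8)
The plan is to start from one parabolic orbit converging to $X$ and to slide it toward $X$ by left-translations, using the group element furnished by the preceding lemma. That lemma provides $u\in\Fix(\phi)$ with $X=u^\infty$. The key point is that, since $\phi(u)=u$, conjugation by $u$ commutes with $\phi$: we have $\phi\circ i_u=i_u\circ\phi$, and hence on the boundary $\partial\phi\circ\partial i_u=\partial i_u\circ\partial\phi$. Recalling from \S\ref{sec:inner} that $\partial i_u$ is the left-translation $Z\mapsto uZ$, this says that left-multiplication by $u$ (and by every power $u^k$) carries $\partial\phi$-orbits to $\partial\phi$-orbits.

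Concretely, I would fix $Y\neq X$ with $\lim_{k\rightarrow\pm\infty}\partial\phi^k(Y)=X$ and set $O=\{\partial\phi^k(Y)\mid k\in\Z\}$. For every $m\in\Z$ the commutation relation gives
$$u^m\cdot O=\{u^m\partial\phi^k(Y)\mid k\in\Z\}=\{\partial\phi^k(u^mY)\mid k\in\Z\},$$
so $u^m\cdot O$ is again a full $\partial\phi$-orbit, namely the orbit of the point $u^mY$. It therefore suffices to show that, for $m$ large enough, the translated orbit $u^m\cdot O$ lies inside a prescribed neighborhood $U$ of $X$. For this I would pass to the closure $K=\overline{O}$. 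Since the orbit accumulates only at $X$ (any sequence of distinct orbit points has indices tending to $\pm\infty$, hence converges to $X$ by hypothesis), $K=O\cup\{X\}$ is closed, hence compact. Now \S\ref{sec:inner} tells us that $\partial i_u$ has North-South dynamics from the repeller $u^{-\infty}$ to the attractor $u^\infty=X$, so the maps $Z\mapsto u^mZ$ converge to the constant $X$ uniformly on every compact subset of $\partial\FN\smallsetminus\{u^{-\infty}\}$. Provided $u^{-\infty}\notin K$, this yields an $m_0$ with $u^{m_0}\cdot K\subseteq U$, whence $u^{m_0}\cdot O\subseteq U$ is a full orbit contained in $U$, as required.

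The one point that needs genuine verification, and which I regard as the main (if modest) obstacle, is that $K$ avoids the repeller $u^{-\infty}$. First, $u^{-\infty}\neq X=u^\infty$ because $u\neq 1$. Second, no orbit point can equal $u^{-\infty}$: since $\phi(u)=u$ we have $\partial\phi(u^{-\infty})=u^{-\infty}$, so if $\partial\phi^k(Y)=u^{-\infty}$ for some $k$ then applying $\partial\phi^{-k}$ forces $Y=u^{-\infty}$, making $O=\{u^{-\infty}\}$ a single fixed point and contradicting $\lim_{k\rightarrow\pm\infty}\partial\phi^k(Y)=X\neq u^{-\infty}$. Hence $u^{-\infty}\notin O\cup\{X\}=K$, the uniform-convergence argument applies, and the proof goes through. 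Everything beyond this verification is formal, resting entirely on the commutation $\partial\phi\circ\partial i_u=\partial i_u\circ\partial\phi$ and on the North-South dynamics of inner automorphisms recalled in \S\ref{sec:inner}.
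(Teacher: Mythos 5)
Your proof is correct and follows essentially the same route as the paper: identify $X=u^\infty$ with $u\in\Fix(\phi)$, use the commutation $\partial\phi\circ\partial i_u=\partial i_u\circ\partial\phi$ to see that $u^m\cdot O$ is again a parabolic orbit, and push the compact set $O\cup\{X\}$ into the given neighborhood via the North-South dynamics of $\partial i_u$. Your explicit verification that $u^{-\infty}\notin O\cup\{X\}$ is the same observation the paper compresses into the remark that $Y\notin\Fix(\partial\phi)$, just spelled out in more detail.
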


\begin{proof}
We have seen that $X=u^{\infty}$, with $u\in\Fix(\phi)$.
We consider a given neighborhood $\mathcal{V}$ of $X$.
Let $\vartheta=\{\partial\phi^k(Y))\; |\; k\in\Z\}$ 
be a parabolic orbit for $X$. 
We note that $\vartheta\cup\{X\}$ is a compact subset of $\partial\FN$.
Moreover, $u^{-\infty}\notin\vartheta\cup\{X\}$ because $Y\notin\Fix(\partial\phi)$.
Since the sequence $(\partial i_u^p)_{p\in\N}$ uniformly converges on compact subsets
of $\partial\FN\smallsetminus\{u^{-\infty}\}$
towards $u^\infty$ when $p$ tends to infinity,
see \S \ref{sec:inner},
the set $\partial i_u^p (\vartheta)$ is contained in
$\mathcal{V}$, up to taking $p$
sufficiently large. We remark that, since
$u\in\Fix(\phi)$, $\partial i_u^p (\partial \phi^k(Y)) 
= \partial \phi^k(u^pY)$, and thus 
$\partial i_u^p (\vartheta)=u^p\vartheta$ is a parabolic orbit for
$X$.
\end{proof}

\subsection{Automorphisms of $F_4$ which have parabolic orbits}

For any $k\in\N$, consider the automorphism $\phi_k$ of $F_4=<a,b,c,d>$ given by: $$
\begin{array}{crcl}
\phi_k: & a & \mapsto & a \\  
& b & \mapsto & ba \\ 
& c & \mapsto & ca^{k+1} \\ 
& d & \mapsto & dc
\end{array}
$$ and its inverse: $$
\begin{array}{crcl}
\phi^{-1}_k: & a & \mapsto & a \\  
& b & \mapsto & ba^{-1} \\ 
& c & \mapsto & ca^{-k-1} \\ 
& d & \mapsto & da^{k+1}c^{-1}
\end{array}
$$

The rose $R_4$ is the geometric realization of
graph with one vertex and 4 edges. 
We put an orientation on each
edge, and we label them by $a$, $b$, $c$ and
$d$. We can turn $R_4$ into a length space
by declaring that each edge has length 1.
As usual, the automorphisms $\phi_k^{\pm1}$
can be realized  as homotopy equivalences 
$f_k^{\pm}$ of the rose $R_4$
where each edge is mapped linearly to the edge path
with label preassigned by $\phi_k^{\pm1}$.

In fact, the automorphisms $\phi_k^{\pm1}$
define outer automorphisms which are 
unipotent polynomially growing in the sense
of \cite{BFH1}, and the maps $f_k^+$ satisfy
the conclusions of Theorem 5.1.8 of \cite{BFH1}.
We do not quote here the statement of this
theorem, which would lead us to introduce a lot 
of technical background, but we freely use in the 
sequel some consequences of it.

Let $\mathcal{A}$ be a basis of $\FN$. We denote
by $[g]$ the reduced word, in the basis $\mathcal{A}$,
representing the element $g\in\FN$.
Let $\phi$ be an automorphism of $\FN$.
A {\em splitting} of $g\in\FN$ for $\phi$ is a way to write
$g=g_1 \ldots g_n$ 
such that:
\begin{enumerate}[(i)]
\item $n\geq 2$,
\item  for all $i\in\{1,\ldots,n\}$, $g_i\in\FN\smallsetminus\{1\}$,
\item for all $p\in\N$, for all $i\in\{1,\ldots,n-1\}$,
$[\phi^p(g_i)][\phi^p(g_{i+1})]=[\phi^p(g_ig_{i+1})]$
(this means that no cancellation occurs between
$[\phi^p(g_i)]$ and $[\phi^p(g_{i+1})]$).
\end{enumerate}
In that case, we note $g=g_1\cdot \ldots \cdot g_n$,
and each $g_i$ is called a {\em brick} of the splitting.

We now apply that Theorem 5.1.8 of \cite{BFH1} to the given
family $\phi_k$ and obtain:

\begin{lemma}\label{lem:split}
For all $g\in F_4$, there exists some $p_0\in\N$ such that for all $p\geq p_0$,
$[\phi_k^p(g)]$ and $[\phi_k^{-p}(g)]$ have a splitting, the bricks of which are either edges
or paths of the following labels: $ba^qb^{-1}$, $ca^qc^{-1}$, 
$ba^qc^{-1}$ or $ca^qb^{-1}$,  for some $q\in\Z$.
\qed
\end{lemma}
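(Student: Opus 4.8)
The plan is to reduce everything to explicit formulas for the iterates of $\phi_k^{\pm 1}$ on the four basis edges, and to read off the splitting from the pattern of cancellations. First I would record that $\phi_k^p(a)=a$, $\phi_k^p(b)=ba^p$, $\phi_k^p(c)=ca^{p(k+1)}$, and $\phi_k^p(d)=d\cdot\prod_{j=0}^{p-1}ca^{j(k+1)}$ (with the analogous formulas for $\phi_k^{-p}$, where the powers of $a$ change sign and $d$ produces $c^{-1}$'s). These follow by induction from the defining substitution, using $\phi_k^p(d)=\phi_k^{p-1}(d)\cdot\phi_k^{p-1}(c)$. From them one checks directly that each candidate brick is carried by $\phi_k^{\pm p}$ to a brick of the same type: $\phi_k^{\pm p}(ba^qb^{-1})=ba^qb^{-1}$ and $\phi_k^{\pm p}(ca^qc^{-1})=ca^qc^{-1}$ are fixed, while $\phi_k^{\pm p}(ba^qc^{-1})=ba^{q\mp pk}c^{-1}$ and $\phi_k^{\pm p}(ca^qb^{-1})=ca^{q\pm pk}b^{-1}$. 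This is exactly what makes the special paths legitimate bricks: condition (iii) of the splitting definition holds at every junction internal to such a path.

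Second, I would locate where cancellation can occur under iteration by computing the first and last letters of $\phi_k^p(e)$ for each edge $e$: the image of a positive edge $a,b,c,d$ ends in $a$, the image of a negative edge $a^{-1},b^{-1},c^{-1},d^{-1}$ begins with $a^{-1}$, and conversely for reversed signs. Consequently, at a junction $e_i e_{i+1}$ of a reduced path no cancellation ever occurs, for any power, unless $e_i$ is positive and $e_{i+1}$ is negative (or vice versa). These ``legal'' junctions are precisely the splitting points producing single-edge bricks; the work is concentrated at the ``illegal'' junctions where a positive and a negative edge meet.

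Third---the heart of the argument---I would analyze each illegal junction $e_i e_{i+1}$ separately by applying $\phi_k^{\pm p}$ to the two-edge subword and reducing explicitly. In every case the outcome is a concatenation of single edges together with at most one special brick: a $b/c$ meeting a $b^{-1}/c^{-1}$ produces directly one of the four exceptional paths, whereas junctions involving $a$ or $d$ either merely shorten a power of $a$ (yielding only single edges) or, in the case of $d^{\pm 1}$ meeting $b^{-1},c^{-1}$ or of $c,d^{-1}$, peel off a single exceptional path from the last (or first) $c$-block of $\phi_k^p(d^{\pm 1})$ while the remainder of that long word splits into single edges. The quantitative input is that the powers of $a$ created by $b,c,d$, such as $p$, $p(k+1)$ and $(p-1)(k+1)$, grow linearly in $p$, so that past a threshold $p_0$ depending on $g$ the cancellation at each illegal junction is saturated and remains strictly inside the two neighbouring bricks.

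The main obstacle, and the reason condition (iii) is delicate, is precisely this uniformity: I must rule out that cancellation triggered at one illegal junction cascades across a neighbouring brick and interacts with the next junction. This is manageable by hand for the linear edges $b,c$ but genuinely awkward for $d$, whose iterates grow quadratically. It is therefore cleanest to invoke Theorem 5.1.8 of \cite{BFH1}: $\phi_k$ is unipotent polynomially growing, $f_k$ is a relative train track map for the filtration $a<\{b,c\}<d$, the linear edges $b$ and $c$ share the single Nielsen twistor $a$, and $d$ sits at a strictly higher stratum with non-Nielsen twistor $c$. The theorem then guarantees that $[\phi_k^{\pm p}(g)]$ is eventually completely split, with pieces drawn only from single edges and the exceptional paths $ba^qb^{-1},ca^qc^{-1},ba^qc^{-1},ca^qb^{-1}$ generated by the shared twistor---which is the assertion of the lemma and simultaneously supplies the uniform cancellation bound that the explicit computation needs.
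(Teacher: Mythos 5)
Your proof is correct and ultimately coincides with the paper's: the paper establishes this lemma precisely by observing that the $\phi_k^{\pm1}$ are unipotent polynomially growing and invoking Theorem 5.1.8 of \cite{BFH1}, the four special path families $ba^qb^{-1}$, $ca^qc^{-1}$, $ba^qc^{-1}$, $ca^qb^{-1}$ being exactly the exceptional paths of the improved train track map $f_k$. Your explicit computations (the iterate formulas, the invariance of the bricks, the location of illegal junctions) are accurate and a useful sanity check, but the decisive step --- the uniform control of cancellation across neighbouring bricks, which you rightly identify as the delicate point --- is carried by the same citation in both arguments.
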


\begin{remark}
For the reader who is familiar with the terminology of \cite{BFH1},
the edge paths labelled by $ba^qb^{-1}$, $ca^qc^{-1}$, 
$ba^qc^{-1}$ or $ca^qb^{-1}$ are precisely the exceptional paths
of the improved train-track map $f_k$.
\end{remark}

As a consequence of Lemma \ref{lem:split}, one can easily check that the 
sequence $(|[\phi_k^p(g)]|)_{p\in\N}$
of  lengths of $[\phi_k^p(g)]$ is bounded above
by a polynomial of degree 2 in $p$.

It is claimed in \cite{Mas} that there exists a general algorithm to compute 
the fixed subgroup of a given automorphism of $\FN$.
There exist some easier algorithms for special cases: for instance, one
could use \cite{CL} to compute the fixed subgroup of $\phi_k$.
In fact, it is sufficient to determines the so called {\em indivisible
Nielsen paths}, see \cite{BH}: using Lemma \ref{lem:split},
 we find that
$\Fix(\phi_k)=\;<a,bab^{-1},cac^{-1}>$.

\begin{theorem}
The set $\{\phi_k\;|\;k\in\N\}$ is
a family of automorphisms of $F_4$,
such that each $\phi_k$ has a parabolic orbit.
The dynamics graph of ${\phi_k}$ is given in Figure
\ref{fig:parabolique}.
For any $k,k',p,p'\in\N$, $\phi_k^p$ and 
$\phi_{k'}^{p'}$ are conjugated if and only if
$k=k'$ and $p=p'$.
\end{theorem}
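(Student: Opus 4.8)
The plan is to establish the three assertions separately, treating the parabolic orbit, the conjugacy classification, and the dynamics graph in turn; the conjugacy count is the headline claim but, perhaps surprisingly, the cheapest to obtain.

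\textbf{Parabolic orbit.} First I would exhibit the orbit explicitly, generalizing the computation of \S\ref{sec:first} (the case $k=1$). The candidate parabolic fixed point is $X=ba^{-\infty}=(bab^{-1})^{-\infty}$, which lies in $\partial\Fix(\phi_k)$ since $bab^{-1}\in\Fix(\phi_k)=\langle a,bab^{-1},cac^{-1}\rangle$, and one checks at once that $\overline{\phi}_k(ba^{-\infty})=ba\cdot a^{-\infty}=ba^{-\infty}$. The element producing the orbit is $bd^{-1}\notin\Fix(\phi_k)$. I would compute the reduced words $[\phi_k^{\,n}(bd^{-1})]$ and $[\phi_k^{-n}(bd^{-1})]$ exactly as in \S\ref{sec:first} (the $a$-exponents now scale with $k$, but the combinatorics is identical), the essential point being that Lemma~\ref{lem:split} certifies that no cancellation occurs at the distinguished junctions of the splitting. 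This forces the common prefix of $\phi_k^{\pm n}(bd^{-1})$ with $ba^{-\infty}$ to grow without bound, so that $\lim_{n\to\pm\infty}\partial\phi_k^{\,n}(bd^{-1})=ba^{-\infty}$, which is precisely a parabolic orbit.

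\textbf{Conjugacy classification.} For the ``only if'' direction I would pass to the abelianization homomorphism $\Aut(F_4)\to GL_4(\Z)$, under which $\phi_k\mapsto M_k$ with
\[
M_k=\begin{pmatrix}1&1&k+1&0\\0&1&0&0\\0&0&1&1\\0&0&0&1\end{pmatrix}.
\]
Since conjugate automorphisms have conjugate abelianizations and abelianization is a homomorphism, it suffices to separate the powers $M_k^{\,p}$ up to $GL_4(\Z)$-conjugacy. A direct computation gives $(M_k^{\,p}-I)^2=p^2(k+1)E_{14}$ (with $E_{14}$ the matrix unit), while the $2\times 2$ minors of $M_k^{\,p}-I$ have greatest common divisor $p^2$. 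Determinantal ideals are unchanged under left and right multiplication by elements of $GL_4(\Z)$, hence under the substitution $A-I\mapsto g(A-I)g^{-1}$; applied here this shows that both $I_2(M_k^{\,p}-I)=(p^2)$ and $I_1\big((M_k^{\,p}-I)^2\big)=(p^2(k+1))$ are conjugacy invariants of $\phi_k^{\,p}$. The first recovers $p$ (since $p\geq 1$), and then the second recovers $k+1$, hence $k$. Thus distinct pairs $(k,p)$ yield non-conjugate automorphisms, and the ``if'' direction is trivial.

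\textbf{Dynamics graph.} The remaining and most laborious step is to identify $\Gamma_{\phi_k}$ with Figure~\ref{fig:parabolique}. Here I would use the limit-point theory of \S\ref{sec:limit}: by Theorem~\ref{th:asymptotic} and Proposition~\ref{prop:omega} every $\omega$-limit is a singular rational point $v^\infty$ with $v\in\Fix(\phi_k)$, and Lemma~\ref{lem:split} lets one read off, for each $X\in\partial F_4$, the pair $\big(\omega_{\phi_k^{-1}}(X),\omega_{\phi_k}(X)\big)$ from the leading bricks of the splittings of $\phi_k^{\pm n}(X)$. Grouping the finitely many resulting limit points into isoglossy classes under $\Fix(\phi_k)$ and recording the edges yields the stated graph; in particular the parabolic orbit found above produces the loop at the class of $ba^{-\infty}$, consistent with the criterion at the end of \S\ref{sec:graph}. \textbf{This dynamical bookkeeping is the main obstacle:} unlike the conjugacy count, which is pure linear algebra over $\Z$, it requires genuinely global control of $\partial\phi_k$ on all of $\partial F_4$ -- enumerating every limit point, its isoglossy class, and every incident edge -- all of which rest on the train-track consequences packaged in Lemma~\ref{lem:split}.
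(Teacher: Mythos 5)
Your proposal is correct, and for two of the three assertions it follows essentially the paper's route. For the parabolic orbit, the paper does exactly what you sketch, only more explicitly: it observes that $\phi_k(bd^{-1})=bac^{-1}\cdot d^{-1}$ is a splitting for $\phi_k$ and that $b\cdot d^{-1}$ is a splitting for $\phi_k^{-1}$, whence $\omega_{\phi_k}(bd^{-1})=\omega_{\phi_k}(bac^{-1})=ba^{-\infty}$ and $\omega_{\phi_k^{-1}}(bd^{-1})=\omega_{\phi_k^{-1}}(b)=ba^{-\infty}$; your ``no cancellation at the junctions'' argument is the same mechanism, and Proposition \ref{prop:omega} converts the orbit of the group element $bd^{-1}$ into a boundary orbit in both treatments. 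The dynamics graph is likewise obtained in the paper by the bookkeeping you describe: reading $\omega$-limits off the leading bricks supplied by Lemma \ref{lem:split} and sorting into isoglossy classes. Where you genuinely diverge is the conjugacy classification. The paper computes the same $M_k^p$ (including the $(1,4)$-entry $\frac{1}{2}(k+1)p(p-1)$), but then introduces the abelianization $P$ of a hypothetical conjugator, solves $M_k^pP=PM_{k'}^{p'}$ to constrain the shape of $P$, uses $\det P=\pm1$ to force the diagonal entries to be $\pm1$, and extracts $k=k'$, $p=p'$ from the relations $p'(k'+1)\lambda_3=p(k+1)\lambda_1$ and $p'\lambda_3=p\lambda_4$. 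Your determinantal-ideal argument is a genuinely different and cleaner route: I verified that $(M_k^p-I)^2=p^2(k+1)E_{14}$ and that the nonzero $2\times2$ minors of $M_k^p-I$ are $p^2$ and $p^2(k+1)$, so $I_2(M_k^p-I)=(p^2)$ and $I_1\bigl((M_k^p-I)^2\bigr)=\bigl(p^2(k+1)\bigr)$, and these ideals are indeed invariant under $GL_4(\Z)$-conjugation since $M-I\mapsto P(M-I)P^{-1}$ and determinantal ideals are unchanged under left and right $GL_4(\Z)$-multiplication. This buys you complete conjugacy invariants with no analysis of the conjugator whatsoever, avoiding the paper's shape computation for $P$.

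One local misstatement in your dynamics-graph paragraph should be repaired: it is \emph{not} true that every $\omega$-limit point is a singular rational point $v^\infty$ with $v\in\Fix(\phi_k)$. The attracting class $X_k^+=\omega_{\phi_k}(d)=dcca^{k+1}ca^{2k+2}ca^{3k+3}\ldots$ and the repulsing class $X_k^-$ are \emph{regular} fixed points and are not rational (the $a$-exponents grow, so these words are not eventually periodic); rationality is proved in the paper only for \emph{singular} limit points, via \cite{H0}, and does not follow from Theorem \ref{th:asymptotic} and Proposition \ref{prop:omega}, which merely place limits in $\Fix(\overline{\phi_k})$. This does not derail your method --- reading limits from leading bricks via Lemma \ref{lem:split} produces $X_k^{\pm}$ correctly, and Figure \ref{fig:parabolique} contains them --- but the blanket claim must be dropped. (A shared boundary-case caveat, not a defect of your argument relative to the paper's: both proofs implicitly need $k\geq1$. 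For $k=0$ one has $bc^{-1}\in\Fix(\phi_0)$ and $\omega_{\phi_0}(bd^{-1})=bac^{-1}a^{-\infty}$, which is not isogloss to the backward limit $ba^{-\infty}$, so the stated fixed subgroup, limit lists, and parabolic orbit all require $k\geq1$.)
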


\begin{figure}[ht!]
\begin{center}
\centerline{
\xy 
(0,0)*+{ba^{-\infty}}; (-2,-3)
**\crv{(-10,10)&(-15,0)&(-10,-10)}
?>*\dir{>},
(2,3); (20,0)*+{ba^{+\infty}}
**\crv{(10,15)}
?>*\dir{>},
(18,-3); (2,-3)
**\crv{(10,-15)}
?>*\dir{>},
(-16,0)*+{^{bd^{-1}}},
(10,10)*+{^{b}},
(10,-12)*+{^{bc^{-1}}},
\endxy
}
$ \xymatrix{ 
X_k^+ & X_k^- \ar[l]_{d} & a^{+\infty}  \ar[r]^{b^{-1}}& 
a^{-\infty} & ca^{-\infty} \ar[r]^{c} \ar[l]_{d^{-1}} & ca^{+\infty}
}$
\caption{The dynamics graph of $\phi_k$ has 3 connected
components.
A label $g$ has been added to each edge: it means
that $\omega_k(g)$ is the endpoint of the edge
and $\omega_k^-(g)$ is the origin of the edge.
(The terminology used here is given in the proof below.)
} \label{fig:parabolique}
\end{center}
\end{figure}
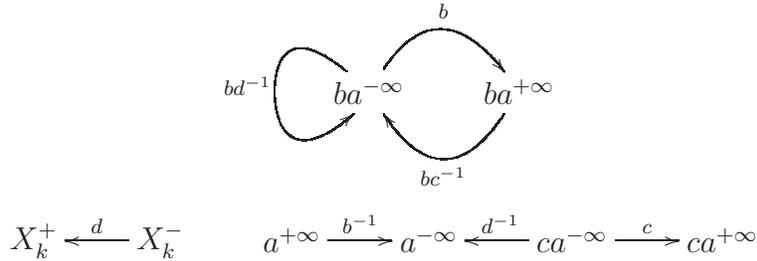

\begin{proof}
For simplicity, we write $\omega_{\phi_k}=\omega_k$
and  $\omega_{\phi_k^{-1}}=\omega_k^-$.
Using Lemma \ref{lem:split}, we check that 
$\phi_k$ has only one isoglossy class of attracting fixed points:
a representative is given by 
$X^+_k=\omega_k(d)=dcca^{k+1}ca^{2k+2}ca^{3k+3}\ldots$
Likewise $\phi_k$ has only one isoglossy class of repulsing fixed points:
a representative is given by 
$X^-_k=\omega_k^-(d)=da^{k+1}c^{-1}a^{2k+2}c^{-1}a^{3k+3}c^{-1}\ldots$

Lemma \ref{lem:split} also gives:
\begin{itemize}
\item $\omega_k(b^{-1})=\omega_k(c^{-1})=\omega_k(d^{-1})=
a^{-\infty}$,
\item $\omega_k(c)=ca^{+\infty}$,
\item $\omega_k(b)=ba^{+\infty}$,
\item $\omega_k(bc^{-1})=ba^{-\infty}$,
\item $\omega_k^-(b^{-1})=\omega_k^-(c^{-1})=a^{+\infty}$,
\item $\omega_k^-(c)=\omega_k^-(d^{-1})=ca^{-\infty}$,
\item $\omega_k^-(b)=ba^{-\infty}$,
\item $\omega_k^-(bc^{-1})=ba^{+\infty}$.
\end{itemize}
In fact, one can see that there are only 5 isoglossy classes in
$L_{\phi_k}^\omega$ (given by $X^+_k$, $a^{-\infty}$, $ca^{+\infty}$,
$ba^{+\infty}$, $ba^{-\infty}$) and 5 isoglossy classes in
$L_{\phi_k^{-1}}^\omega$ (given by $X^-_k$, $a^{+\infty}$, $ca^{-\infty}$,
$ba^{-\infty}$, $ba^{+\infty}$).

Note that $\phi_k(bd^{-1})=bac^{-1}\cdot d^{-1}$ is a splitting
for $\phi_k$. Hence
$\omega_k(bd^{-1})=\omega_k(bac^{-1})=ba^{-\infty}$. On the
other hand, $b\cdot d^{-1}$ is a splitting for $\phi_k^{-1}$. Hence
$\omega_k^-(bd^{-1})=\omega_k^-(b)=ba^{-\infty}$.
Thus $ba^{-\infty}$ is parabolic fixed point for $\phi_k$.

Suppose that $\phi_k^p$ and $\phi_{k'}^{p'}$ are conjugated
($k,k',p,p'\in\N$): there exists
$\psi\in\Aut(F_4)$ such that $\phi_k^p=\psi\phi_{k'}^{p'}\psi^{-1}$. 
Let $M_k, M_{k'}, P\in GL(4,\Z)$ be the matrices obtained by abelianization of 
respectively $\phi_k$, $\phi_{k'}$ and $\psi$. Then 
$$
M_k^p=\left(\begin{array}{cccc} 
1 & p & (k+1)p & \frac{1}{2}(k+1)p(p-1) \\ 
0 & 1 & 0 & 0 \\
0 & 0 & 1 & p \\
0 & 0 & 0 & 1
\end{array}\right).
$$
Computing $M_k^pP=PM_{k'}^{p'}$, one sees that $P$ must have the following shape:
$$
P=\left(\begin{array}{cccc} 
\lambda_1 & \mu_1& \mu_2 & \mu_3 \\ 
0 & \lambda_2 & 0 & \mu_4 \\
0 & \mu_5 & \lambda_3 & \mu_6 \\
0 & 0 & 0 & \lambda_4
\end{array}\right)
$$
with 
\begin{equation}\label{eq:pk}
p'(k'+1)\lambda_3=p(k+1)\lambda_1
\;\;\text{ and }\;\; p'\lambda_3=p\lambda_4.
\end{equation}
We deduce that $\det P=\lambda_1\lambda_2\lambda_3\lambda_4$, and thus
$\lambda_i\in\{\pm1\}$, since $\det P=\pm1$. 
From (\ref{eq:pk}) we derive $k=k'$ and $p=p'$. 
\end{proof}

\subsection{Parabolic orbits for $N \geq 5$}

For any $k\in\N$, consider the automorphism $\alpha_k$ of $F_5=<a,b,c,d,e>$ given by: $$
\begin{array}{ccccc}
\alpha_k: & a & \mapsto & a \\  
& b & \mapsto & ba \\ 
& c & \mapsto &
ca^{k+1} \\ 
& d & \mapsto & dc \\
& e & \mapsto & e
\end{array}
$$ 
Since the restriction of $\alpha_k$ to $<a,b,c,d>$ is $\phi_k$,
it is clear that 
$\omega_{\alpha_k}(bd^{-1})=\omega_{\alpha_k^{-1}}(bac^{-1})=ba^{-\infty}$
is a parabolic fixed point for $\alpha_k$.
Considering the abelianization and
arguing as previously, we check that if $k\neq k'$
and $p\neq p'$, then
$\alpha_k^p$ and $\alpha_{k'}^{p'}$ can not be conjugated.

If $N\geq 6$, we split $\FN=F_4*F_2*F_{N-6}$.
We first recall some facts about $\Out(F_2)$.
It is well known, since Nielsen \cite{Nie0}, 
that the abelianisation morphism 
from $\Out(F_2)$ to $GL_2(\Z)$ is an isomorphism.
If $M\in SL_2(\Z)$ has a trace bigger than 2,
then $M$ has an eigenvalue $\lambda>1$ which is
an algebraic unity of a quadratic extension of $\Q$:
we call $\lambda$ the dilatation of $M$.
For all $k\in\N$ prime, there exists $M_k\in SL_2(\Z)$
such that the dilatation $\lambda_k$ of $M_k$ belongs to 
$\Q(\sqrt{k})\smallsetminus \Q$. This implies in 
particular that for all $p\in\N$, $\lambda_k^p\in
\Q(\sqrt{k})\smallsetminus \Q$.
We choose $\theta_k\in\Aut(F_2)$ in the outer class
represented by $M_k$.
Then the automorphism $\theta_k^p$ has growth rate
equal to $\lambda_k^p$. 

We define $\beta_k\in\Aut(\FN)$ by
$\beta_k=\phi_1*\theta_k*id$, where $id$ is the
identity on $F_{N-6}$. Again,
$\omega_{\beta_k}(bd^{-1})=\omega_{\beta_k^{-1}}(bac^{-1})=ba^{-\infty}$
is a parabolic fixed point for $\beta_k$.
Since $\phi_1$ is polynomially growing, it follows that the
growth rate of $\beta_k^p$ is $\lambda_k^p$ (see for instance \cite{Lev1}).
This proves that $\beta_k^p$ is not conjugated to $\beta_{k'}^{p'}$
if $k\neq k'$ or $p\neq p'$, because the growth rate is a conjugacy
invariant
and because $\Q(\sqrt{k})\cap\Q(\sqrt{k'})=\Q$ (if $k$ and $k'$
are prime integers).

This finishes the proof Theorem \ref{thm:main}.
In view of Proposition \ref{prop:F2}, it remains
to ask the following question, the answer of which
we do not know:

\begin{question}
Does there exist an automorphism of $F_3$
which has a parabolic orbit?
\end{question}


\section{Dehn twist automorphisms of $F_2$}\label{sec:dehn}

In this last section, we calculate the dynamics graphs of all the
automorphisms in the outer class of $\delta^n$ ($n\in\Z$, $n\neq 0$), 
where $\delta$ is the automorphism of $F_2=<a,b>$ defined by
$\delta(a)=a$ and $\delta(b)=ba$.

Let $D\in\Out(F_2)$ be the outer class of $\delta$.
As explained in \S \ref{sec:geometric}, the automorphisms
in the outer class $D^n$ ($n\in\Z$) can not have parabolic orbits.
We are going to describe more precisely the dynamics induced on 
$\partial\FN$ by the automorphisms in the outer class $D^n$ ($n\in\Z$, $n\neq 0$).
For that, we pursue the strategy of \cite{GJLL,LL1}, where the interested
reader will be able to find details of the following constructions.

The rose $R_2$ is the geometric realization of the
graph with one vertex and $2$ edges. 
We put an orientation on each
edge, and we label them by $a$ and $b$. 
We can turn $R_2$ in a length space
by declaring that each edge has length 1.
We represent $D^n$ by an homotopy equivalence $f$
of $R_2$ defined  in the following way: 
$f$ is the identity on the edge $a$
and linearly sends the edge $b$ to the edge path
labelled $ba^n$.

The universal cover $\widetilde{R}_2$ of $R_2$ is a tree,
equipped by the action of $F_2$ by deck transformations.
 We lift the labels of the edges of $R_2$ to the edges of
 $\widetilde{R}_2$. Equivalently, $\widetilde{R}_2$ can be
 considered as the Cayley graph of $F_2$ relative to the
 generating set $\{a,b\}$.
 Let $T$ be the tree obtained by contracting in $\widetilde{R}_2$
 all the edges labelled by $a$: the action of $\FN$ on 
 $\widetilde{R}_2$ induces an action of $F_2$ on $T$ by isometries.
 We note that the stabilizer of a vertex of $T$ is conjugated to
 the subgroup $<a>\;\subset\FN$ generated by $a$.

As in the geometric case (see \S \ref{sec:geometric}) 
the automorphisms in the outer class $D^n$ are in 1:1 correspondance
with the lifts of $f$ to $\widetilde{R}_2$.
Moreover, 
these lifts of $f$ induce isometries of $T$.
More precisely, the isometry $H$ of $T$ associated to the automorphism 
$\delta^n\in D^n$ satisfies
$$\delta^n(g)\circ H=H\circ g \quad\forall g\in
F_N,$$ where the elements of $F_N$ are considered as isometries
of $T$.
Then, for $u\in\FN$, the map 
$H_u=u\circ H$ is the isometry of $T$ associated 
to the automorphism  $i_u\circ\delta^n\in D^n$, since
$(i_u\circ \delta^n)(g)\circ H_u=H_u\circ g$ 
holds for all $g\in F_N$. 

If $H_u$ is a hyperbolic isometry of $T$, then  $i_u\circ\delta^n$
has North-South dynamics and the fixed points of $i_u\circ\delta^n$
are determined by the ends of the axis of $H_u$ in $T$, see \cite{LL1}.

If $H_u$ is an elliptic isometry, let $P\in T$ be a fixed point of $H_u$.
There exists some $w\in\FN$ such that the stabilizer of $P$ in $\FN$
is $w<a>w^{-1}$. The fact that $P$ is a fixed point of $H_u$ then
results in the existence of an integer $k\in\Z$ such that
$u\delta^n(w)=wa^k$.
Or equivalently, such that  
$i_u\circ\delta^n = i_w\circ(i_{a^k}\circ\delta^n)\circ i_w^{-1}$.
Indeed, 
$$\begin{array}{rcl}
i_u\circ\delta^n & = & i_{w a^k (\delta^n(w))^{-1} }\circ\delta^n \\
 & = & i_{w{a^k}{\delta^n(w^{-1})}}\circ\delta^n \\
 & = & i_w\circ i_{a^k}\circ i_{\delta^n(w^{-1})}\circ\delta^n \\
 & = & i_w\circ i_{a^k}\circ\delta^n\circ i_{w^{-1}}. 
\end{array}$$
The dynamics of $\partial(i_u\circ\delta^n)$ is thus conjugated
to the dynamics of $\partial(i_{a^k}\circ\delta^n)$ for some $k\in\Z$.
We are now going to study in more detail the automorphisms 
$i_{a^k}\circ\delta^n$ for $k\in\Z$, and in particular, to give their
dynamics graphs.

The inverse of $i_{a^k}\circ\delta^n$ is $i_{a^{-k}}\circ\delta^{-n}$.
We note that:
$$
\begin{array}{crclccrcl}
i_{a^k}\circ \delta^n: & a & \mapsto & a & & i_{a^{-k}}\circ \delta^{-n}: & a & \mapsto & a \\ 
& b & \mapsto & a^{k}ba^{n-k} & & & b & \mapsto & a^{-k}ba^{k-n} \\ 
& b^{-1} & \mapsto & a^{k-n}b^{-1}a^{-k} & & & b^{-1} & \mapsto & a^{n-k}b^{-1}a^{k}.
\end{array}
$$
Thus the dynamics of $\partial(i_{a_2^k}\circ\delta^n)$ depends on the
sign of $k$ and of $n-k$.

\begin{remark}\label{rem:sigma}
Let $\sigma\in\Aut(\FN)$ defined by $\sigma(a)=a^{-1}$ 
and $\sigma(b)=b^{-1}$.
We note that $i_{a^k}\circ\delta^n$ and  $i_{a^{n-k}}\circ\delta^n$ 
are conjugated by the involution $\sigma$.
\end{remark}

\noindent{\bf First case:} Assume $k(n-k)=0$.
Since $\delta^n$ and $i_{a^{n}}\circ\delta^n$ are conjugated
by $\sigma$ (see Remark \ref{rem:sigma}), we focus on $\delta^n$.
One can check that $\Fix(\delta^n)=<a,bab^{-1}>$.
Let $X$ be a point in $\partial F_2\smallsetminus\partial<a,bab^{-1}>$,
and let $x$ be the longest prefix of $X$ in $<a,bab^{-1}>$.
Then $X=xY$, with no cancellation between $x$ and $Y$, 
and the first letter of $Y$ is equal to $b$ or to $b^{-1}$.
If $Y$ begins by $b$, then $\omega_{\delta^n}(Y)=ba^{\infty}$
and $\omega_{\delta^{-n}}(Y)=ba^{-\infty}$.
If $Y$ begins by $b^{-1}$, then $\omega_{\delta^n}(Y)=a^{-\infty}$
and $\omega_{\delta^{-n}}(Y)=a^{\infty}$.
Hence $\delta^n$ has 2 isoglossy classes of $\omega$-limit points
(with representatives $ba^\infty$ and $a^{-\infty}$),
and $\delta^{-n}$ has 2 isoglossy classes of $\omega$-limit points
(with representatives $ba^{-\infty}$ and $a^{\infty}$).
The dynamics graph of $\delta^n$ is given in Figure \ref{fig:k(n-k)=0}.

\begin{figure}[ht!]
\begin{center}
$ \xymatrix{ 
ba^\infty &  a^{-\infty} & &\ar@{--}[dd] & & b^{-1}a^{-\infty}  & a^{\infty} \\
ba^{-\infty} \ar[u] & a^{\infty} \ar[u] & & & & b^{-1}a^{\infty} \ar[u] & a^{-\infty} \ar[u] \\
  k=0 &&&&&& k=n 
}$
\caption{Dynamics graph of $i_{a^k}\circ\delta^n$
for $k(n-k)=0$.
} \label{fig:k(n-k)=0}
\end{center}
\end{figure}

\noindent{\bf Second case:} $k(n-k)<0$.
We suppose that $k>n$ (from which one deduces the case $k<0$ by using 
Remark \ref{rem:sigma}).
The fixed subgroup is $\Fix (i_{a^k}\circ\delta^n) =\; <a>$.
We note that 
$\omega_{i_{a^k}\circ\delta^n}(b)=
\omega_{i_{a^k}\circ\delta^n}(b^{-1})=a^\infty$
and 
$\omega_{(i_{a^{k}}\circ\delta^{n})^{-1}}(b)=
\omega_{(i_{a^{k}}\circ\delta^{n})^{-1}}(b^{-1})=a^{-\infty}$.
It follows that  $\partial(i_{a^{k}}\circ\delta^{n})$ has 
North-South dynamics on $\partial F_2$,
see Figure \ref{fig:k(n-k)<0}.

\begin{figure}[ht!]
\begin{center}
$ \xymatrix{ 
a^{-\infty} & &\ar@{--}[dd] & & a^{\infty} \\
a^{\infty} \ar[u] & & & & a^{-\infty} \ar[u] \\
  k<0 &&&& k>n 
}$
\caption{Dynamics graph of $i_{a^k}\circ\delta^n$
for $k(n-k)<0$.
} \label{fig:k(n-k)<0}
\end{center}
\end{figure}

\noindent{\bf Third case:} $k(n-k)>0$, i.e. $0<k<n$.
We check that the fixed subgroup is equal to $\Fix (i_{a^k}\circ\delta^n) =\; <a>$.
We note that 
$\omega_{i_{a^k}\circ\delta^n}(b)=a^\infty$,
$\omega_{i_{a^k}\circ\delta^n}(b^{-1})=a^{-\infty}$,
$\omega_{(i_{a^{k}}\circ\delta^{n})^{-1}}(b)=a^{-\infty}$ and
$\omega_{(i_{a^{k}}\circ\delta^{n})^{-1}}(b^{-1})=a^{\infty}$.
For $x\in\partial F_2$, 
it follows that $\omega_{i_{a^k}\circ\delta^n}(X)$
and $\omega_{(i_{a^{k}}\circ\delta^{n})^{-1}}(X)$
depend only on the first occurence of the letter $b$
or $b^{-1}$ in $X$:
if it is $b$, then 
$\omega_{i_{a^k}\circ\delta^n}(X)=a^\infty$
and $\omega_{(i_{a^{k}}\circ\delta^{n})^{-1}}(X)=a^{-\infty}$;
if it is $b^{-1}$, then 
$\omega_{i_{a^k}\circ\delta^n}(X)=a^{-\infty}$
and $\omega_{(i_{a^{k}}\circ\delta^{n})^{-1}}(X)=a^{\infty}$.
We say that $\partial(i_{a^{k}}\circ\delta^{n})$ has 
{\em semi-North-South dynamics} on $\partial F_2$,
see Figure \ref{fig:k(n-k)>0}.

\begin{figure}[ht!]
\begin{center}
$ \xymatrix{ 
a^{-\infty} \ar@/^1pc/[d]\\
a^{\infty} \ar@/^1pc/[u]
}$
\caption{Dynamics graph of $i_{a^k}\circ\delta^n$
for $k(n-k)>0$.
} \label{fig:k(n-k)>0}
\end{center}
\end{figure}


\bibliographystyle{amsplain}
\bibliography{biblio}

\end{document}